\newcommand{\R}{{\mathbb R}}
\let\oldsection\section
\renewcommand\section{\setcounter{equation}{0}\oldsection}
\newtheorem{theorem}{Theorem}[section]
\newtheorem{lemma}{Lemma}[section]
\newtheorem{definition}{Definition}[section]
\newtheorem{remark}{Remark}[section]
\let\pa=\partial
\let\f=\frac
\let\Om=\Omega
\def\ba{\begin{eqnarray}}
\def\ea{\end{eqnarray}}
\def\R{\Bbb R}
\def\na{\nabla}
\newcommand{\beq}{\begin{equation}}
\newcommand{\eeq}{\end{equation}}
\newcommand{\ben}{\begin{eqnarray}}
\newcommand{\een}{\end{eqnarray}}
\newcommand{\beno}{\begin{eqnarray*}}
\newcommand{\eeno}{\end{eqnarray*}}
\begin{document}

\title[Scaling invariant Serrin criterion via one velocity
component ]{Scaling invariant Serrin criterion via one velocity
component for the Navier-Stokes equations}

\author{Wendong Wang}
\address[Wendong Wang]{School of Mathematical Sciences, Dalian University of Technology, Dalian, 116024,  P. R. China}
\email{wendong@dlut.edu.cn}

\author{Di Wu}
\address[Di Wu]{School of Mathematical Sciences, Peking University, Beijing, 100871,  P. R. China}
\email{wudi@math.pku.edu.cn}

\author{Zhifei Zhang}
\address[Zhifei Zhang]{School of Mathematical Sciences, Peking University, Beijing, 100871,  P. R. China}
\email{zfzhang@math.pku.edu.cn}

\date{\today}




\begin{abstract}
In this paper, we prove that the Leray weak solution $u : \mathbb{R}^3\times (0, T)\rightarrow\mathbb{R}^3 $ of the Navier-Stokes equations is regular in $\R^3\times (0,T)$ under the scaling invariant Serrin condition imposed on one component
of the velocity $u_3\in L^{q,1}(0, T;L^p(\mathbb{R}^3))$ with
\beno
\frac2q+\frac3p\leq 1,\quad 3<p<+\infty.
\eeno
This result is an immediate consequence of a new local regularity criterion in terms of one velocity component for suitable weak solutions.
\end{abstract}

\maketitle

\allowdisplaybreaks

\section{Introduction}\label{sec1}
In this paper, we study the incompressible Navier-Stokes equations
\begin{equation}\label{eq:NS}
(NS)\left\{\begin{array}{l}
\partial_t u-\Delta u+u\cdot \nabla u+\nabla \pi=0,\\
{\rm div } u=0,\\
u(x,0)=u_0,
\end{array}\right.
\end{equation}
where $\big(u(x,t), \pi(x,t)\big)$ denote the velocity and the pressure of the fluid respectively.

In the pioneering work \cite{Leray}, Leray introduced the concept of weak solutions to $(NS)$ and proved the global existence for datum $u_0\in L^2(\R^3)$. Kato \cite{Kato2} initiated the study of $(NS)$ with initial data belonging to the space $L^3(\R^3)$ and obtained global existence in a subspace of $C([0,\infty),L^3(\R^3))$ provided the norm $\|u_0\|_{L^3(\R^3)}$ is small enough.
 The existence result for initial data small in the Besov space $\dot{B}^{-1+\frac{3}{p}}_{p,q}(\R^3)$ for $p\in [1,\infty)$ and $q\in[1,\infty]$ can be found in \cite{CMP2,gip2}. The function spaces $L^3(\R^3)$ and  $\dot{B}^{-1+\frac{3}{p}}_{p,q}(\R^3)$ for $(p,q)\in [1,\infty)^2$ both guarantee the existence of local-in-time solution for any initial data. Koch and Tataru \cite{KT2} showed that global well-posedness holds as well for small initial data in the space $\mathrm{BMO}^{-1}(\R^3)$. On the other hand, it has been shown by Bourgain and  Pavlovi\'c \cite{BP2} that the Cauchy problem with initial data in $\dot{B}^{-1}_{\infty,\infty}(\R^3)$ is ill-posed no matter how small the initial data is.

In two spatial dimensions, Leray weak solution is unique and regular.
In three spatial dimensions, the regularity and uniqueness of weak solution is an
outstanding open problem in the mathematical fluid mechanics. It was known that if the weak solution $u$ of (\ref{eq:NS}) satisfies
so called Ladyzhenskaya-Prodi-Serrin(LPS) type condition
\beno
\quad u\in L^q(0,T; L^p(\R^3))\quad \textrm{with} \quad \f 2 q+\f 3p\le 1, \quad p\ge 3,
\eeno
then it is regular in $\R^3\times (0,T)$, see \cite{Serrin, Giga, Struwe, ESS}, where the regularity in the class  $L^\infty(0,T; L^3(\R^3))$ was proved
by Escauriaza, Seregin and \v{S}ver\'{a}k \cite{ESS}.  In \cite{gkp12}, based on the work \cite{KK2}, Gallagher, Koch and Planchon gave an alternative proof of the results in \cite{ESS} by the method of profile decomposition. In \cite{gkp2}, they  extended the method in \cite{gkp12} to release the space from $L^3$ to the Besov space with negative power. See \cite{WZ2, Al} for further extensions. Recently, Tao \cite{TT} proved the blow-up rate of the solution $u$ of (\ref{eq:NS}) if the solution $u$ blows up in finite time. We should mention that in the case $\frac{2}{q}+\frac{3}{p}=1$, the function space $L^q_tL^p_x$ is invariant under the Navier-Stokes scaling:
\begin{align}
u(x,t)\mapsto u^{\lambda}(x,t)=\lambda u(\lambda x,\lambda^2 t),\quad\forall \lambda>0,
\end{align}
where $u^\lambda$ is still a solution to \eqref{eq:NS} with initial data $u^\lambda_0:=\lambda u_0(\lambda x)$.

Concerning the partial regularity of weak solution satisfying the local energy inequality, initiated by Scheffer \cite{SV2},
Caffarelli, Kohn and Nirenberg \cite{CKN} showed that one dimensional Hausdorff measure of the possible singular set is zero.
One could check Lin \cite{Lin} and Ladyzhenskaya and Seregin \cite{LS} for the simplified proof and improvements. More generalizations could be found from \cite{TX, GKT, Vasseur, Ku, WZ1, WZ3} and the references therein.

Starting in \cite{NP}, there are many interesting works devoted to a new LPS type criterion, which only involves one component of the velocity. Neustupa,  Novotn\'y and Penel \cite{NNP} proved the LPS type criterion for one component  $u_3\in L^q(0,T;L^p(\mathbb{R}^3))$ with $\frac{2}{q}+\frac{3}{p}\leq \frac{1}{2}$.
Later, this condition was improved by Kukavica and Ziane \cite{KZ} to
\begin{align*}
\frac{2}{q}+\frac{3}{p}= \f 58, \quad p>\frac{24}{5},\quad \f {16} 5\le q<+\infty;
\end{align*}
and by Cao and Titi \cite{CT1} to
\begin{align*}
\frac{2}{q}+\frac{3}{p}\leq \frac{2}{3}+\frac{2}{3p},\quad p>\frac{7}{2};
\end{align*}
and then by Pokorn\'y and  Zhou \cite{PZ} up to
\begin{align*}
\frac{2}{q}+\frac{3}{p}\leq \frac{3}{4}+\frac{1}{2p},\quad  p>\frac{10}{3}.
\end{align*}
However, these conditions are not scaling invariant. Recently, Chemin and Zhang \cite{CZ} obtained a blow-up criterion via one velocity component in a scaling invariant space $L^p_t(\dot{H}_x^{\frac{1}{2}+\frac{2}{p}})$ with $4<p<6$. Later, Chemin,  Zhang and  Zhang \cite{CZZ} released the restriction on $p$ to $4<p<\infty$ and Han et al.  \cite{HLLZ} extended the arrange of $p$ to  $2\le p<+\infty$.
However, as stated in \cite{Neu}, {\it the question whether the condition  $u_3\in L^q(0,T;L^p(\mathbb{R}^3))$ for $p$ and $q$, basically satisfying the condition $\f 2 q + \f 3 p\le1$, is sufficient for regularity of solution $u$ in $\R^3\times (t_1, t_2)$, is still open.}

Very recently, Chae and Wolf \cite{CW} made an important progress and obtained the regularity of solution to \eqref{eq:NS} under the almost LPS type condition
\begin{align*}
u_3\in L^q(0,T;L^p(\mathbb{R}^3)),\quad\frac{2}{q}+\frac{3}{p}<1,\quad 3<p<\infty.
\end{align*}

\smallskip

The aim of this paper is to obtain LPS criterion via one velocity component with $\frac{2}{q}+\frac{3}{p}\le 1$.
Now let us state our main result.

\begin{theorem}\label{thm:main1}
Let $u_0\in L^2(\mathbb{R}^3)\cap L^3(\mathbb{R}^3)$ and $(u,\pi)$ be Leray weak solution of (\ref{eq:NS}) in $\R^3\times (0,T)$.
If $u$ satisfies the following condition
\begin{align}\label{eq:LPS}
u_3\in L^{q,1}(0,T;L^p(\mathbb{R}^3)),\quad\frac{2}{q}+\frac{3}{p}\le 1,\quad 3<p<\infty,
\end{align} then $u$ is a regular in $\R^3\times (0,T)$. Here $L^{q,1}$ denotes the Lorentz space with respect to variable $t$.
\end{theorem}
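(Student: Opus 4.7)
Following the hint in the abstract, I would first establish a local $\varepsilon$-regularity criterion of the following form: there exists a universal $\varepsilon_0>0$ (depending only on $p,q$) such that any suitable weak solution $(u,\pi)$ of $(NS)$ in the parabolic cylinder $Q_1 = B_1 \times (-1, 0)$ satisfying the natural Caffarelli--Kohn--Nirenberg type bounds on $u$ and a local bound on the pressure, together with
\begin{equation*}
\|u_3\|_{L^{q,1}(-1,0; L^p(B_1))}\le \varepsilon_0,
\end{equation*}
is regular in a neighborhood of the origin. Granting this, Theorem~\ref{thm:main1} follows by the standard translate-and-rescale argument: at any would-be singular point $(x_0,t_0)\in \R^3\times (0,T]$ of a Leray weak solution, one rescales $u^\lambda(x,t)=\lambda u(x_0+\lambda x,\, t_0+\lambda^2 t)$. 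The exponent relation $\tfrac{2}{q}+\tfrac{3}{p}=1$ forces $\|u_3^\lambda\|_{L^{q,1}(-1,0;L^p(B_1))}$ to coincide (up to universal constants) with $\|u_3\|_{L^{q,1}(t_0-\lambda^2,t_0;\, L^p(B_\lambda(x_0)))}$, and the \emph{absolute continuity of the Lorentz $L^{q,1}$-norm} with respect to the time measure -- a property that fails for the plain Lebesgue space $L^q$ -- forces this quantity to tend to zero as $\lambda \downarrow 0$. For $\lambda$ sufficiently small the local criterion applies and rules out the singularity.

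\textbf{Proof of the local criterion.} To prove the criterion I would follow a compactness/contradiction scheme in the spirit of Lin and Chae--Wolf \cite{CW}. If no such $\varepsilon_0$ existed, there would be suitable weak solutions $(u^{(k)},\pi^{(k)})$ on $Q_1$, each having the origin as a singular point, with $\|u^{(k)}_3\|_{L^{q,1}L^p}\to 0$. Combining the local energy inequality with a Chae--Wolf-type local pressure decomposition $\pi^{(k)}=\pi^{(k)}_1+\pi^{(k)}_2$ -- designed to bypass the nonlocality of the Helmholtz projector -- one extracts a limit $(u^\infty,\pi^\infty)$ on (say) $Q_{1/2}$ that is a suitable weak solution with $u_3^\infty\equiv 0$. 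The divergence-free constraint then reduces $(u_1^\infty,u_2^\infty)$ to an essentially two-dimensional flow, which must be regular at the origin. Transferring this regularity back to $u^{(k)}$ via the Escauriaza--Seregin--\v{S}ver\'ak type $\varepsilon$-regularity theorem \cite{ESS} (or a $L^3$-smallness refinement of the CKN criterion) produces the desired contradiction.

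\textbf{Main obstacle.} The chief difficulty is the exactly critical scaling. In the subcritical Chae--Wolf regime $\tfrac{2}{q}+\tfrac{3}{p}<1$ one gains smallness ``for free'' by a simple H\"older-in-time argument, which keeps the compactness step comparatively soft; at the endpoint $\tfrac{2}{q}+\tfrac{3}{p}=1$ that cushion disappears, and the only mechanism that recovers smallness on short time windows is the strict Lorentz refinement $L^{q,1}\subsetneq L^q$ together with its absolute continuity. Consequently, every estimate in the compactness argument -- the local energy bound, the pressure decomposition, and above all the control of $\nabla(u_1,u_2)$ purely from $u_3$ via the constraint $\partial_3 u_3 = -\partial_1 u_1-\partial_2 u_2$ together with the vorticity equation -- must be performed strictly at the critical scale, which I expect to force a careful use of anisotropic Sobolev embeddings and real interpolation in the Lorentz scale. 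Producing a genuinely local pressure estimate that is compatible with this critical Lorentz framework (without leaking back to global information) is the piece of the argument I anticipate to be the most delicate.
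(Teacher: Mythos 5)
Your proposal has the right outer shell --- a local criterion for suitable weak solutions combined with a Chae--Wolf type rescaling/compactness step, which is indeed how the paper deduces Theorem \ref{thm:main1} from Theorem \ref{thm:main2} --- but there is a genuine gap at its center. Your local $\varepsilon$-regularity criterion takes as a \emph{hypothesis} ``the natural Caffarelli--Kohn--Nirenberg type bounds'' on the rescaled solutions $u^{\lambda}$. For a general suitable weak solution the scale-invariant quantities $\lambda^{-1}\sup_t\int_{B_\lambda}|u|^2$, $\lambda^{-1}\int_{Q_\lambda}|\nabla u|^2$ and $\lambda^{-2}\int_{Q_\lambda}|u|^3$ are \emph{not} known to stay bounded as $\lambda\downarrow 0$; their uniform boundedness is precisely a Type I assumption. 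Producing such a uniform bound from the hypothesis on $u_3$ alone is the entire technical content of the paper: Theorem \ref{thm:main2} asserts $r^{-2}\|u\|^3_{L^3(Q_r(z_0))}\le C$ for all small $r$ and all $z_0$, and its proof (the weighted local energy estimates with the shifted one-dimensional backward heat kernel $\Phi_n$, the dyadic decomposition into the annuli $A_i(R)$, the localized pressure decomposition, and the iteration Lemma \ref{lem:iter}) is exactly what your proposal omits. Without that input the compactness step has no uniform bounds from which to extract a limit, so your reduction does not close.

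Moreover, the mechanism you assign to the Lorentz exponent is not the right one. You claim the needed smallness comes from absolute continuity of the $L^{q,1}$ norm in time, ``a property that fails for the plain Lebesgue space $L^q$.'' This is false: for $u_3\in L^q_tL^p_x$ with $\tfrac{2}{q}+\tfrac{3}{p}=1$ one also has $\|u_3\|_{L^q(t_0-\lambda^2,t_0;L^p)}\to 0$ as $\lambda\downarrow 0$ by dominated convergence, and this quantity coincides with the scale-invariant norm of $u_3^{\lambda}$. If absolute continuity were the operative property, your argument would equally prove the endpoint criterion for plain $L^q$ --- which the paper explicitly records as still open. The property of $L^{q,1}$ the paper actually exploits is $\ell^1$-summability over dyadic time scales of the critically normalized localized norms: the quantities $B_i$ and $C_i$ satisfy $\sum_{i}B_i\lesssim \|u_3\|_{L^{q,1}_tL^p_x}$ (Lemma \ref{lem:lorentz}, via the atomic decomposition of Lemma \ref{lem:at-decom}), and it is this summability that allows the Gronwall-type iteration of Lemma \ref{lem:iter} to close with a bound of the form $C(\mathcal{E}+\mathcal{E}^{3/2})e^{C\mathcal{E}^{1/2}}$. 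For $u_3$ merely in $L^q$ each individual $B_i$ tends to zero, but the series need not converge and the iteration breaks down. Any repaired version of your argument must isolate and use this dyadic summability rather than absolute continuity.
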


Theorem \ref{thm:main1} is a consequence of the following Theorem \ref{thm:main2} via a similar compactness argument in \cite{CW}. Hence we omit the detail.

\begin{remark}
The initial data in $L^2(\mathbb{R}^3)\cap L^3(\mathbb{R}^3)$ in Theorem \ref{thm:main1} implies the local-in-time regularity of weak solutions, thus the weak solution is actually suitable weak solution.
\end{remark}

Next let us introduce the definition of suitable weak solution.
\begin{definition} Let $\Om\subset \R^3$ and $T>0$. We say that $(u,\pi)$ is a suitable weak solution
of (\ref{eq:NS}) in $\Om_T=\Om\times (-T,0)$ if
\begin{enumerate}

\item $u\in L^{\infty}(-T,0;L^2(\Om))\cap L^2(-T,0;H^1(\Om))$ and $\pi\in L^{\frac32}(\Om_T)$;

\item \eqref{eq:NS} is satisfied in the sense of distribution;

\item there holds the local energy inequality: for  any nonnegative $\phi\in C_c^\infty(\R^3\times\R)$
vanishing in a neighborhood of the parabolic boundary of $\Om_T$,
\ben\label{eq:energy inequality}
&&\int_{\Om}|u(x,t)|^2\phi dx+2\int_{-T}^t\int_{\Om}|\nabla u|^2\phi dxds\nonumber\\
&&\leq
\int_{-T}^t\int_{\Om}|u|^2(\partial_s\phi+\triangle\phi)+u\cdot\nabla\phi(|u|^2+2\pi)dxds
\een
for any $t\in [-T,0]$.
\end{enumerate}
\end{definition}

\begin{theorem}\label{thm:main2}
Let $(u,\pi)$ be a suitable weak solution of (\ref{eq:NS}) in $\R^3\times (-1,0)$.
If $u$ satisfies  the following  condition
\beno
u_3\in L^{q,1}(-1,0;L^p(\mathbb{R}^3)),\quad \frac2q+\frac3p\le 1,\quad 3< p<\infty,
\eeno
then it holds that
\beno
r^{-2}\|u\|_{L^3(Q_r(z_0))}^3\leq C,
\eeno
for any $0<r<\frac12$ and any $z_0\in\mathbb{R}^3\times(-\frac12,0]$. Here $Q_r=B_r(0)\times (-r^2,0)$.

\end{theorem}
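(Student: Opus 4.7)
My plan is to reduce Theorem \ref{thm:main2} to a Caffarelli--Kohn--Nirenberg type local $\varepsilon$-regularity criterion, and then to absorb the bad nonlinear and pressure contributions using the hypothesis on $u_3$. Concretely, it is enough to produce a universal $\varepsilon_0>0$ and some $r_0>0$ (depending on the data and the $L^{q,1}_tL^p_x$ norm of $u_3$) such that for every $z_0$ in the prescribed set and every $0<r<r_0$,
\[
r^{-2}\int_{Q_r(z_0)}|u|^3+r^{-2}\int_{Q_r(z_0)}|\pi-\bar\pi_r|^{3/2}\le \varepsilon_0.
\]
The scheme follows the blueprint of Chae--Wolf \cite{CW}, but now at the critical scaling line $\f2q+\f3p=1$, where the slack used in the strict case disappears; its role here will be played by the absolute continuity of the Lorentz norm $L^{q,1}$.

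\textbf{Localised energy estimate.} Following \cite{CW}, I would derive a localised energy inequality for $u_h=(u_1,u_2)$ by testing the horizontal components of \eqref{eq:NS} against $u_h\phi^2$ with $\phi\in C_c^\infty$ supported near $Q_r(z_0)$, making systematic use of the divergence-free condition $\p_3 u_3=-\nabla_h\cdot u_h$ and one integration by parts in $x_3$ together with the third $u$-equation, so that every ``bad'' trilinear term ends up carrying one factor of $u_3$. The key estimate, by H\"older and three-dimensional Gagliardo--Nirenberg
\[
\|u_h\phi\|_{L^{2p/(p-2)}}\le C\|u_h\phi\|_{L^2}^{(p-3)/p}\|\nabla(u_h\phi)\|_{L^2}^{3/p},
\]
followed by Young's inequality, yields
\[
\int_{\R^3}|u_3||u_h||\nabla u_h|\phi^2\,dx\le \tfrac14\|\nabla(u_h\phi)\|_{L^2}^2+C\|u_3\|_{L^p}^q\|u_h\phi\|_{L^2}^2,
\]
with the exponent $\f{2p}{p-3}$ matching exactly $q$ on the critical line. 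The pressure is split via $\pi=\sum_{i,j}R_iR_j(u_iu_j)$ into a part carrying a factor $u_3\in L^p$ and a purely horizontal part; the latter, plus its Riesz-transform nonlocality on $Q_r$, is controlled through the standard harmonic--plus--singular-integral decomposition used in \cite{CW}.

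\textbf{Closing via the Lorentz norm.} A Gronwall argument in $t$ then yields, up to scale-invariant corrections from the pressure and the cut-off,
\[
\sup_{-r^2<t<0}\!\int_{B_r}|u_h|^2\phi^2\,dx+\!\int_{Q_r}|\nabla u_h|^2\phi^2\,dxdt\le C_0\exp\!\Bigl(C\!\int_{-r^2}^0\!\|u_3(\cdot,t)\|_{L^p}^q\,dt\Bigr).
\]
Since $u_3\in L^{q,1}(-1,0;L^p)$, the Lorentz norm is absolutely continuous: there is a modulus $\omega(\delta)\to 0$ as $\delta\to 0$ such that $\int_I\|u_3(\cdot,t)\|_{L^p}^q\,dt\le\omega(|I|)$ for every interval $I\subset(-1,0)$. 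Choosing $r$ small forces the exponential factor close to $1$ and the full right-hand side as small as desired in the rescaled (scale-invariant) energy norms. Feeding this back into an $L^3$ estimate for $u$---via Gagliardo--Nirenberg on $u_h$ and the trivial bound on $u_3$ in $L^p$---produces the $\varepsilon_0$ smallness required by the CKN criterion, completing the proof. The main obstacle is the sharpness of every H\"older and Gagliardo--Nirenberg step on the critical line: a bare $L^q$ assumption on $u_3$ would leave a borderline logarithmic loss in the Gronwall step, and it is the strict inclusion $L^{q,1}\subsetneq L^q$ together with the absolute continuity of its norm that makes the estimate close uniformly in $r$.
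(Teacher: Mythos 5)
Your proposal has two genuine gaps, and the second is decisive: if your argument closed as written, it would prove the theorem for $u_3\in L^q_tL^p_x$, which the authors explicitly state remains open.

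First, the localization. Testing the horizontal equations against $u_h\phi^2$ with $\phi$ supported near $Q_r(z_0)$ produces, besides the trilinear terms carrying a factor $u_3$, the term $\int |u_h|^2\,u_h\cdot\nabla_h(\phi^2)$, in which no $u_3$ appears. With $|\nabla_h\phi|\sim r^{-1}$ this is $r^{-1}\int_{Q_r}|u_h|^3$, exactly critical, and it can be neither absorbed by the dissipation nor made small by the energy. The paper's construction is built to avoid this: the test function is $\Phi_n\eta\psi$ with $\Phi_n$ the \emph{one-dimensional} backward heat kernel in $x_3$, so that the dangerous gradient is $\partial_3\Phi_n$ and always pairs with $u_3$, while the horizontal cutoff $\psi$ lives at scale $R\geq\frac12$ (eventually $R\to\infty$), so the $|u|^3|\nabla\psi|$ term gains a subcritical factor $r_i^{1/2}$ (the term $I_{23}$ in Lemma 3.1) and is summable over scales. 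The estimates are carried out on anisotropic slabs $U_n(R)=B'(R)\times(-r_n,r_n)$, not on parabolic cylinders, and the conclusion is the bound $r^{-2}\|u\|^3_{L^3(Q_r)}\leq C$ with a \emph{large} constant, not the $\varepsilon_0$-smallness you aim for; regularity in Theorem 1.1 is then extracted by the compactness argument of Chae--Wolf, not by feeding a small constant into the CKN criterion.

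Second, the role of the Lorentz exponent. The absolute continuity you invoke, $\int_I\|u_3\|_{L^p}^q\,dt\leq\omega(|I|)$, holds for every $u_3\in L^q_tL^p_x$ by dominated convergence, so it cannot be what separates $L^{q,1}$ from $L^q$; your Gronwall step, if it closed, would give the open $L^q$ endpoint. Where $L^{q,1}$ actually enters is the summability over dyadic time scales: the iteration produces $\sum_{k\geq0}r_k^{1-\frac3p-\frac2{\tilde q}}\bigl(\int_{-r_k^2}^0\|u_3(\cdot,s)\|_{L^p}^{\tilde q}\,ds\bigr)^{1/\tilde q}$ with $\tilde q<q$; on the critical line each summand is bounded by $\|u_3\|_{L^q(-r_k^2,0;L^p)}$, so under an $L^q$ hypothesis the series is bounded term by term but need not converge, whereas the atomic decomposition of $L^{q,1}$ (Lemmas A.2 and A.3) makes it $\ell^1$. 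That $\ell^1$ bound is exactly the input required by the discrete Gronwall lemma (Lemma A.1) run over all dyadic scales simultaneously; a single-scale Gronwall in $t$ does not see this multi-scale structure.
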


\begin{remark}
Compared with the result in \cite{CW}, our main contribution is that the condition \eqref{eq:LPS} with the equality is invariant under the Navier-Stokes scaling. Due to the inclusion $L^{q,1}\subsetneq L^q$ for $q>1$, the regularity of weak solution under the condition $u_3\in L^q(0,T;L^p(\mathbb{R}^3))$ with $\frac{2}{q}+\frac{3}{p}=1$ is still open.

\end{remark}

%

\section{An intuitive argument}

In this section, let us present an intuitive argument to show the regularity condition via one velocity component.
We introduce
\beno
U(t,x_3)=\int_{\R^2}|u(x_h, x_3,t)|^2dx_h.
\eeno
Then $U(t,x_3)$ satisfies
\beno
\f 12\pa_tU-\f 1 2 \pa_{x_3}^2U+\int_{\R^2}|\na u(x,t)|^2dx_h=-\int_{\R^2}u\cdot \na u\cdot u dx_h-\int_{\R^2}\na \pi\cdot udx_h.
\eeno
By integration by parts and $\na \cdot u=0$, we get
\begin{align*}
&\f 12\pa_tU-\f 1 2 \pa_{x_3}^2U+\int_{\R^2}|\na u(x,t)|^2dx_h
\\&=\int_{\R^2}-\pa_{x_3}u_3\f 12|u|^2dx_h-\int_{\R^2}u_3\pa_{x_3}\f 12|u|^2dx_h-\int_{\R^2}\pa_{x_3}\pi u_3dx_h\\
\\&\qquad-\int_{\R^2}\pi\pa_{x_3}u_3dx_h\\
&=-\pa_{x_3}\int_{\R^2}u_3\f 12|u|^2dx_h-\pa_{x_3}\int_{\R^2}\pi u_3dx_h.
\end{align*}
Since there is a velocity component $u_3$ for each nonlinear term on the right hand side, this simple argument shows the reason that the regularity criterion
via one component is reasonable.

Next let us motivate our result via the following toy equation
\beno
\pa_tU-\pa_{x_3}^2U=-\pa_{x_3}\int_{\R^2}u_3|u|^2dx_h.
\eeno
Then we have
\beno
U(t,x_3)=e^{t\pa_{x_3}^2}U_0-\int_0^te^{(t-s)\pa_{x_3}^2}\pa_{x_3}\Big(\int_{\R^2}u_3|u|^2dx_h\Big)ds.
\eeno
Using the estimate of heat kernel, we obtain
\beno
\|U(t)\|_{L^\infty}\le \|U(t-\delta)\|_{L^\infty}+C\int_{t-\delta}^t(t-s)^{-\f12}\|u_3(s)\|_{L^\infty}\|U(s)\|_{L^\infty}ds,
\eeno
which gives
\beno
\sup_{s\in [t-\delta,t]}\|U(s)\|_{L^\infty}\le \|U(t-\delta)\|_{L^\infty}
+C\int_{t-\delta}^t(t-s)^{-\f12}\|u_3(s)\|_{L^\infty}ds\sup_{s\in [t-\delta,t]}\|U(s)\|_{L^\infty}.
\eeno
Therefore, if $u_3\in L^{2,1}((0,T); L^\infty)$, then we have
\begin{align*}
\int_{t-\delta}^t(t-s)^{-\f12}\|u_3(s)\|_{L^\infty}ds&\le \|(t-s)^{-\f12}\|_{L^{2,\infty}}\big\|\|u_3(s)\|_{L^\infty}\big\|_{L^{2,1}}\\
&\le C\big\|\|u_3(s)\|_{L^\infty}\big\|_{L^{2,1}(t-\delta,t)}.
\end{align*}
This shows that
\beno
\sup_{s\in [t-\delta,t]}\|U(s)\|_{L^\infty}\le \|U(t-\delta)\|_{L^\infty}
+C\big\|\|u_3(s)\|_{L^\infty}\big\|_{L^{2,1}(t-\delta,t)}\sup_{s\in [t-\delta,t]}\|U(s)\|_{L^\infty}.
\eeno
Thus, if $\delta$ is small enough, we conclude that
\beno
\sup_{s\in [t-\delta,t]}\|U(s)\|_{L^\infty}\le 2\|U(t-\delta)\|_{L^\infty}.
\eeno
In particular, this argument implies that
$u\in L^\infty_{t,x_3}(L^2_{x_h})$, which is a scaling invariant estimate  under the Navier-Stokes scaling.

Regarding to nonlocal pressure, this argument seems difficult to apply the original Navier-Stokes equations.

\section{Local energy estimates}

In this section, we apply a similar argument as in the proof of Caffarelli-Kohn-Nirenberg theorem \cite{CKN} or Chae-Wolf \cite{CW},
inserting $\phi=\Phi_n\zeta$ in (\ref{eq:energy inequality}), where $\zeta$ denotes a cut-off function, while $\Phi_n$ stands for the
shifted fundamental solution to the backward heat equation in one spatial dimension,
i.e.,
\beno
\Phi_n(x,t)=\frac{1}{\sqrt{4\pi(-t+r_n^2)}}e^{-\frac{x_3^2}{4(-t+r_n^2)}}, \quad (x,t)\in \mathbb{R}^3\times (-\infty,0),
\eeno
where
\beno
r_n=2^{-n}, \quad n\in \mathbb{N}.
\eeno

Following  the notations in \cite{CW}, for $0<R<\infty$, we set
\beno
&&U_n(R)=U(0,R,r_n)=B'(R)\times (-r_n,r_n),\\
&& Q_n(R)=U_n\times (-r_n^2,0),\\
&& A_n(R)=B'(R)\times A^*_n,
\eeno
where
\beno
A_n^*=Q_n^*\setminus{Q_{n+1}^*}, \quad Q_n^*=(-r_n,r_n)\times (-r_n^2,0).
\eeno
Clearly, there exist absolute constants $c_1, c_2 > 0$ such that for all $0 < R < +\infty$, $n\in \mathbb{N}$
and $j = 1,\cdots, n$, it holds
\ben
&&\Phi_n\leq c_2 r_j^{-1},\quad |\partial_3\Phi_n|\leq c_2 r_j^{-2}\quad {\rm in}~A_j(R);\label{eq:Phi-n Aj}\\
&&c_1 r_n^{-1}\leq \Phi_n\leq c_2 r_n^{-1},\quad  |\partial_3\Phi_n|\leq c_2 r_n^{-2}\quad {\rm in}~Q_n(R)\label{eq:Phi-n Qn}.
\een

Given $R>0$ and $n\in \mathbb{N}_0$,    the following notation will be used in what follows:
\beno
E_n(R)=\displaystyle\sup_{t\in (-r_n^2,0)}\int_{U_n(R)}|u(t)|^2dx+\int_{-r_n^2}^0\int_{U_n(R)}|\nabla u|^2dxds,
\eeno
\beno
\mathcal{E}=\sup_{t\in (-1,0)}\int_{\mathbb{R}^3}|u(t)|^2dx+\int_{-1}^0\int_{\mathbb{R}^3}|\nabla u|^2dxds.
\eeno

%

By Sobolev's embedding theorem and a standard interpolation argument, it follows that
\ben\label{eq:sobolev ml}
\|u\|_{L^m(-r_n^2,0; L^l(U_n(R)))}^2\leq C E_n(R), \quad \forall~2\leq m\leq \infty,\quad \frac{2}{m}+\frac3l=\frac32.
\een
(see also  Lemma 3.1 in \cite{CW}).

Let $\eta(x_3,t)\in C_c^\infty((-1,1)\times(-1,0])$ denote a cut-off function such that $0\leq \eta\leq 1$ in $\mathbb{R}\times (-1,0]$, $\eta=1$ on
$Q_1^*(-\frac12,\frac12)\times (-\frac14,0)$. In addition, let $\frac12\leq \rho<R$ be arbitrarily chosen, but $|R-\rho|<\frac12$. Let $\psi=\psi(x')\in C^\infty(\mathbb{R}^2)$ with $0\leq \psi\leq 1$ in $B'(R)$ satisfying
\begin{equation}\label{eq:psi}\psi(x)=\psi(|x|)=
\left\{\begin{array}{llll}
1\qquad ~{\rm in}~B'(\rho);\\
0 \qquad ~{\rm in} ~\mathbb{R}^2\backslash B'{(\frac{R+\rho}{2})},
\end{array}\right.
\end{equation}
and
\beno
|D\psi| \leq \frac{C}{R-\rho},\quad |D^2\psi| \leq \frac{C}{(R-\rho)^2}.
\eeno

Direct energy estimates yield that
\ben\label{eq:energy w}
&&\frac12\int_{U_0(R)}|u(\cdot,t)|^2\Phi_n(\cdot,t)\eta(\cdot,t)\psi dx+\int_{-1}^t \int_{U_0(R)}|\nabla u|^2\Phi_n\eta\psi dxds\nonumber\\
&&\leq  \frac12\int_{-1}^t\int_{U_0(R)}|u|^2(\partial_t+\triangle)(\Phi_n\eta\psi) dxds+\frac12 \int_{-1}^t\int_{U_0(R)}|u|^2u\cdot\nabla(\Phi_n\eta\psi) dxds\nonumber\\
&&\qquad+\int_{-1}^t\int_{U_0(R)}\pi u\cdot\nabla(\Phi_n\eta\psi)dxds.
\een

Our goal of rest of this section is to control the three terms on the right hand side of the above estimate.

\subsection{Estimates for nonlinear terms}
\begin{lemma}\label{prop:I1I2}
Let $(u,\pi)$ be a suitable weak solution of (\ref{eq:NS}) in $\R^3\times (-1,0)$. Suppose that  $(u,\pi)$ satisfies the same assumption of Theorem \ref{thm:main2}. Then we have
\begin{align}\label{eq:I1}
\int_{-1}^t\int_{U_0(R)}|u|^2(\partial_t+\triangle)(\Phi_n\eta\psi) dxds\leq C\frac{\mathcal{E}}{(R-\rho)^2},
\end{align}
and there exists a positive series $\{B_i\}_{i\in\mathbb{N}}$ with $\sum_{i=0}^{\infty}B_i\leq \|u_3\|_{L^{q,1}_tL^p_x}$ such that for any $n\in\mathbb{N}$ we have
\begin{eqnarray}\label{eq:I2}
\begin{split}
&\int_{-1}^t\int_{U_0(R)}|u|^2u\cdot\nabla(\Phi_n\eta\psi) dxds\\
&\leq C\sum_{i=0}^{n}(r_i^{-1}E_i(R))B_i
+C(R-\rho)^{-1}\mathcal{E}^{\frac12} \sum_{i=0}^{n}r_i^{\frac12}(r_i^{-1}E_i(R))+C\mathcal{E}^{\frac{3}{2}}.
\end{split}
\end{eqnarray}
\end{lemma}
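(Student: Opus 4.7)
For \eqref{eq:I1}, I would exploit that $\Phi_n$ is the (shifted) backward heat kernel in $x_3$, i.e.\ $(\pa_t+\pa_{x_3}^2)\Phi_n=0$, together with the fact that $\psi=\psi(x')$ is independent of $(x_3,t)$ while $\Phi_n\eta$ is independent of $x'$. A direct expansion then gives
\beno
(\pa_t+\Delta)(\Phi_n\eta\psi)=\psi\bigl[\Phi_n(\pa_t+\pa_{x_3}^2)\eta+2\pa_{x_3}\Phi_n\,\pa_{x_3}\eta\bigr]+\Phi_n\eta\,\Delta_{x'}\psi,
\eeno
with the main $\Phi_n$ contribution annihilated. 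On the support of $\pa_t\eta,\pa_{x_3}\eta,\pa_{x_3}^2\eta$ the kernel $\Phi_n$ and $\pa_{x_3}\Phi_n$ stay uniformly bounded in $n$ (the concentration point of the backward heat kernel is avoided there), so that bracket contributes at most $C\mathcal{E}$. On the support of $\Delta_{x'}\psi$ one has $|\Delta_{x'}\psi|\leq C(R-\rho)^{-2}$ and $\Phi_n\eta\leq C$, yielding $C(R-\rho)^{-2}\mathcal{E}$. Since $R-\rho<1/2$ forces $(R-\rho)^{-2}\geq 4$, both contributions merge into $C\mathcal{E}(R-\rho)^{-2}$.

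For \eqref{eq:I2}, I would first split
\beno
u\cdot\nabla(\Phi_n\eta\psi)=u_3\,\psi\,\pa_{x_3}(\Phi_n\eta)+\Phi_n\eta\,(u_1\pa_1\psi+u_2\pa_2\psi),
\eeno
so every summand carries either the distinguished component $u_3$ or the cut-off derivative $\nabla_{x'}\psi$. Further decomposing $\pa_{x_3}(\Phi_n\eta)=\eta\,\pa_{x_3}\Phi_n+\Phi_n\,\pa_{x_3}\eta$, the piece with $\pa_{x_3}\eta$ is supported away from the concentration of $\Phi_n$ and is bounded by $C\int|u|^3$; using the standard 3D Gagliardo--Nirenberg bound $\|u\|_{L^3(\R^3)}^3\leq C\|u\|_{L^2}^{3/2}\|\nabla u\|_{L^2}^{3/2}$ and then H\"older in $t$, this piece delivers the $C\mathcal{E}^{3/2}$ term. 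The dominant core term $\int|u|^2 u_3\,\psi\eta\,\pa_{x_3}\Phi_n\,dxdt$ I would decompose over $Q_n(R)\cup\bigcup_{j=0}^{n-1}A_j(R)$ with the dyadic weight $|\pa_{x_3}\Phi_n|\leq C r_j^{-2}$ from \eqref{eq:Phi-n Aj}--\eqref{eq:Phi-n Qn}. On each region H\"older in $t$ and $x$ gives
\beno
\int_{A_j(R)}|u|^2|u_3|\,dxdt\leq \|u_3\|_{L^q_tL^p_x(A_j(R))}\,\|u\|_{L^{2q'}_tL^{2p'}_x(Q_j(R))}^2,
\eeno
and I would then reduce the $u$-norm down to a Sobolev-admissible pair $(m,l)$ with $2/m+3/l=3/2$ via \eqref{eq:sobolev ml}, at the cost of a volume factor in the small measures $|U_j|\sim r_j$ in $x_3$ and $|\text{time}|\sim r_j^2$. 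A direct computation with the choice $l=2p'$, $m=4p/3$ shows that under the critical relation $2/q+3/p=1$ the total gain is exactly $r_j^{1/2}$ per $u$-factor, hence $\|u\|_{L^{2q'}L^{2p'}(Q_j(R))}^2\leq C r_j E_j(R)$. Multiplying by the weight $r_j^{-2}$ produces the scaling-invariant combination $(r_j^{-1}E_j(R))\,\|u_3\|_{L^q_tL^p_x(A_j(R))}$, so one may take $B_j$ essentially equal to $\|u_3\|_{L^q_tL^p_x(A_j(R))}$.

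The summability $\sum_jB_j\leq\|u_3\|_{L^{q,1}_tL^p_x}$ is the point at which the Lorentz refinement of the LPS condition becomes essential, and is also the main technical obstacle. Since the temporal projections of the $A_j^*$ sit at dyadic scales $\sim r_j^2=4^{-j}$, one invokes the dyadic characterization $\|f\|_{L^{q,1}}\sim\sum_k 2^{-k/q}f^*(2^{-k})$ applied to $f(t):=\|u_3(t)\|_{L^p_x}$: the $L^q$-norms of $f$ on nested dyadic time intervals of length $\sim 4^{-j}$ are exactly what the $L^{q,1}$-norm controls in an $\ell^1$-summable way, whereas the plain $L^q$-norm does not. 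Finally, for the remaining summand $\int|u|^2\Phi_n\eta\,(u_1\pa_1\psi+u_2\pa_2\psi)\,dxdt$ I would use $|\nabla_{x'}\psi|\leq C(R-\rho)^{-1}$ and $|\Phi_n|\leq C r_j^{-1}$ to reduce matters to $C(R-\rho)^{-1}\sum_{j}r_j^{-1}\int_{A_j(R)}|u|^3\,dxdt$; bounding $\int_{A_j(R)}|u|^3\leq \|u\|_{L^\infty_tL^2_x(A_j(R))}\cdot\int\|u(t)\|_{L^4(U_j(R))}^2\,dt$, invoking the 3D interpolation $\|u\|_{L^4}\leq C\|u\|_{L^2}^{1/4}\|u\|_{L^6}^{3/4}$, Sobolev embedding, and H\"older in $t$ (with $|\tau_j|\sim r_j^2$), one arrives at $\int_{A_j(R)}|u|^3\leq C\mathcal{E}^{1/2}r_j^{1/2}E_j(R)$, which produces precisely the middle sum $C(R-\rho)^{-1}\mathcal{E}^{1/2}\sum_i r_i^{1/2}(r_i^{-1}E_i(R))$.
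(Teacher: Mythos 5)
Your architecture matches the paper's (backward heat kernel $\Phi_n$, dyadic decomposition over $Q_n(R)\cup\bigcup_j A_j(R)$, H\"older pairing $u_3$ against the energy interpolation \eqref{eq:sobolev ml}, Lorentz refinement for summability), and your treatment of the $\partial_3\eta$, $\nabla_{x'}\psi$ and $Q_n$ pieces is essentially the paper's. But two steps do not go through as written. The minor one is in \eqref{eq:I1}: the support of $\Delta_{x'}\psi$ is an annulus in $x'$ only, so it contains points with $x_3=0$, $t\to 0^-$, where $\Phi_n\sim r_n^{-1}$ is \emph{not} bounded uniformly in $n$. The paper repairs this with the dyadic sum $\int_{Q_0(R)}|u|^2\Phi_n\,dxds\le C\sum_j r_j^{-1}\int_{Q_j(R)}|u|^2\,dxds\le C\sum_j r_j\,\mathcal{E}$, the gain coming from the time interval of length $r_j^2$; your conclusion survives, but not by the stated reason.

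The serious gap is your choice of H\"older exponents in the core term, which determines $B_j$. Putting $u_3$ in $L^q_tL^p_x(A_j(R))$ and $u$ in $L^{2q'}_tL^{2p'}_x(Q_j(R))$ does give $\|u\|^2_{L^{2q'}_tL^{2p'}_x(Q_j)}\le Cr_jE_j(R)$ and hence $B_j\approx\|u_3\|_{L^q_tL^p_x(A_j(R))}$, but then $\sum_j B_j$ is \emph{not} controlled by $\|u_3\|_{L^{q,1}_tL^p_x}$. Indeed, the time projection of $A_j(R)$ is all of $(-r_j^2,0)$, and for an $L^{q,1}$-atom $f_\ell$ (height $2^{\ell/q}$, support of measure $2^{-\ell}$ near $t=0$) one has $\|f_\ell\|_{L^q(-r_j^2,0)}=1$ for every $j\le \ell/2$, so the sum over $j$ costs a factor $\sim\ell$ per atom; even on the disjoint time annuli an atom spread over $\sim\ell$ of them yields $\sum_j\|f_\ell\|_{L^q(J_j)}\sim\ell^{1-1/q}$. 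So the dyadic characterization of $L^{q,1}$ you invoke does not give $\ell^1$-summability of local $L^q$ norms — that statement is false. The paper avoids this by H\"oldering $u_3$ in time only up to the sub-critical exponent $\tilde q=\f{2p}{2p-3}$ (dual to $\f{2p}{3}$, so the $u$-factor is taken at the critical pair $(m,l)=(\f{4p}{3},2p')$ of \eqref{eq:sobolev ml} with no $r_j$ gain there), while keeping the Gaussian weight from $\pa_3\Phi_n$ inside the time integral. The resulting $B_i$ carries the factor $r_k^{1-\f3p-\f2{\tilde q}}=r_k^{\f2q-\f2{\tilde q}}$ against a local $L^{\tilde q}$ norm plus the tails $e^{-r_i^2/(32r_k^2)}$, and the bound $\sum_i B_i\le C\|u_3\|_{L^{q,1}_tL^p_x}$ is a genuine lemma (Lemma \ref{lem:lorentz}, via the atomic decomposition of Lemma \ref{lem:at-decom}) whose proof uses the strict inequality $\tilde q<q$ to make the dyadic sums converge. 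Your exponent choice spends exactly the power of $r_k$ that this mechanism needs, so the summability step — which you correctly flag as the main obstacle — cannot be completed from your $B_j$.
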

\begin{proof}
Let $(u,\pi)$ be the solution satisfying the condition in Lemma \ref{prop:I1I2}. The proof of \eqref{eq:I1} is quite similar with \cite{CW}. In details, we notice that
\begin{align*}
\int_{-1}^t\int_{U_0(R)}|u|^2(\partial_t+\triangle)&(\Phi_n\eta\psi) dxds\\
&=\int_{-1}^t\int_{U_0(R)}|u|^2(\Phi_n\partial_t\eta\psi+2\partial_3\Phi_n\partial_3\eta\psi+\Phi_n\triangle(\eta\psi)) dxds,
\end{align*}
which along with  \eqref{eq:Phi-n Aj} and \eqref{eq:Phi-n Qn} implies that
\begin{align*}
\int_{-1}^t\int_{U_0(R)}|u|^2&(\partial_t+\triangle)(\Phi_n\eta\psi) dxds\\
&\leq C\int_{A_0(R)}|u|^2dxds+C\frac{1}{(R-\rho)^2}\int_{Q_0(R)}|u|^2\Phi_ndxds\\
&\leq C(R-\rho)^{-2}\sum_{j=0}^nr_j^{-1}\int_{Q_j(R)}|u|^2dxds.
\end{align*}
By taking the $L^\infty$ in time and the definition of $Q_j(R)$,
we obtain
\begin{align*}
\int_{-1}^t\int_{U_0(R)}|u|^2&(\partial_t+\triangle)(\Phi_n\eta\psi) dxds\\
&\leq C(R-\rho)^{-2}\sum_{j=0}^nr_j\mathcal{E}\leq C(R-\rho)^{-2}\mathcal{E}.
\end{align*}
This proves  \eqref{eq:I1}.\smallskip

Next we turn to give the proof of \eqref{eq:I2}. We first notice that
\begin{align*}
&\int_{-1}^t\int_{U_0(R)}|u|^2u\cdot\nabla(\Phi_n\eta\psi) dxds\\
&\leq  \sum_{i=0}^{n-1}\int_{A_i(R)}|u|^2|u_3||\partial_3\Phi_n|\eta\psi dxds+\int_{Q_n(R)}|u|^2|u_3||\partial_3\Phi_n|\eta\psi dxds\\
&\quad+ \int_{Q_0(R)}|u|^3|\nabla\psi|\Phi_n\eta dxds+\int_{Q_0(R)}|u|^2|u_3||\partial_3\eta|\Phi_n\psi dxds\\
&\leq  C\sum_{i=0}^{n-1}\int_{A_i(R)}|u|^2|u_3|\frac{|x_3|}{(\sqrt{(-s+r_n^2)})^3}e^{-\frac{x_3^2}{4(-s+r_n^2)}}\eta\psi dxds\\
&\quad+C\int_{Q_n(R)}|u|^2|u_3|\frac{|x_3|}{(\sqrt{(-s+r_n^2)})^3}e^{-\frac{x_3^2}{4(-s+r_n^2)}}\eta\psi dxds\\
&\quad+ \int_{Q_0(R)}|u|^3|\nabla\psi|\Phi_n\eta dxds+\int_{Q_0(R)}|u|^2|u_3||\partial_3\eta|\Phi_n\psi dxds\doteq I_{21}+\cdots+I_{24}
\end{align*}
We first notice that the last term $I_{24}$ on the right hand side can by easily controlled as
\begin{align}\label{eq:I-24}
I_{24}\leq\int_{Q_0(R)}|u|^2|u_3|dxds\leq C\mathcal{E}^{\frac{3}{2}}.
\end{align}
Before presenting the details of estimate about $I_{21}$ and $I_{22}$, we introduce $B_i$ as
\begin{align*}
B_i=& \sum_{k=i}^{\infty}\left(\int_{-r_k^2}^{0}\|u_3(\cdot,s)\|_{L^p}^{\frac{2p}{2p-3}}ds\right)^{\frac{2p-3}{2p}}r_{k}^{-1}e^{-\frac{r_i^2}{32r_k^2}}\\
&+r_i^{-1}\left(\int_{-r_{i}^2}^{0}\|u_3(\cdot,s)\|_{L^p}^{\frac{2p}{2p-3}} ds\right)^{\frac{2p-3}{2p}}.
\end{align*}

About $I_{21}$, we first notice that
\begin{align*}
I_{21}\leq &C
\sum_{i=0}^{n-1}\int_{A_i(R)\cap\{r_{i+1}\leq |x_3|\leq r_i, -r_{i+1}^2\leq s\leq 0\}}|u|^2|u_3|\frac{|x_3|}{(\sqrt{(-s+r_n^2)})^3}e^{-\frac{x_3^2}{4(-s+r_n^2)}}\eta\psi dxds\\
&+C\sum_{i=0}^{n-1}\int_{A_i(R)\cap \{|x_3|\leq r_i, -r_{i}^2\leq s\leq -r_{i+1}^2\}}|u|^2|u_3|\frac{|x_3|}{(\sqrt{(-s+r_n^2)})^3}e^{-\frac{x_3^2}{4(-s+r_n^2)}}\eta\psi dxds\\
=&I_{211}+I_{212}.
\end{align*}
Firstly, we observe that by \eqref{eq:sobolev ml}
\beno
\|u\|_{L^{\frac{4p}{3}}(-r_i^2,0; L^{2p'}(U_i(R)))}^2\leq C E_i(R),
\eeno
as $\frac{3}{2p}+\frac3{2p'}=\frac32$.
For the first term $I_{211}$, we observe that
\begin{align*}
I_{211}\leq& C\sum_{i=0}^{n-1}\int_{-r_{i+1}^2}^0\|u(\cdot,t)\|_{L^{2p'}(U_i(R))}^2\|u_3(\cdot,s)\|_{L^p}\frac{1}{(-s+r_n^2)}e^{-\frac{r_i^2}{32(-s+r_n^2)}} ds\\
\leq& C\sum_{i=0}^{n-1}E_i(R)\left(\int_{-r_{i+1}^2}^0\|u_3(\cdot,s)\|_{L^p}^{\frac{2p}{2p-3}}\frac{1}{(-s+r_n^2)^{\frac{2p}{2p-3}}}e^{-\frac{pr_i^2}{(32p-12)(-s+r_n^2)}} ds\right)^{\frac{2p-3}{2p}}.
\end{align*}
On the other hand, we notice the following fact that for any $s\in[-r^2_{i+1},0]$
\beno
\frac{1}{(-s+r_n^2)^{\frac{p}{2p-3}}}e^{-\frac{pr_i^2}{(32p-12)(-s+r_n^2)}+\frac{r_i^2}{32(-s+r_n^2)}}&\leq&Cr_i^{-\frac{2p}{2p-3}}.
\eeno
Gathering the above two estimates, we obtain
\begin{align*}
I_{211}\leq& C\sum_{i=0}^{n-1}(r_i^{-1}E_i(R))\left(\int_{-r_{i+1}^2}^0\|u_3(\cdot,s)\|_{L^p}^{\frac{2p}{2p-3}}\frac{1}{(-s+r_n^2)^{\frac{p}{2p-3}}}e^{-\frac{r_i^2}{32(-s+r_n^2)}} ds\right)^{\frac{2p-3}{2p}}.
\end{align*}
 Before going further, we pay our attention to the term:
\beno
\left(\int_{-r_{i+1}^2}^0\|u_3(\cdot,s)\|_{L^p}^{\frac{2p}{2p-3}}\frac{1}{(-s+r_n^2)^{\frac{p}{2p-3}}}e^{-\frac{r_i^2}{32(-s+r_n^2)}} ds\right)^{\frac{2p-3}{2p}},
\eeno
which is actually controlled by $B_i$. Indeed,  we notice that for any $0\leq i\leq n-1$,
\ben\label{eq:Bi1}
&&\left(\int_{-r_{i+1}^2}^0\|u_3(\cdot,s)\|_{L^p}^{\frac{2p}{2p-3}}\frac{1}{(-s+r_n^2)^{\frac{p}{2p-3}}}e^{-\frac{r_i^2}{32(-s+r_n^2)}} ds\right)^{\frac{2p-3}{2p}}\nonumber\\
&&=\left(\int_{-r_{i+1}^2-r^2_n}^{-r_n^2}\|u_3(\cdot,s+r_n^2)\|_{L^p}^{\frac{2p}{2p-3}}\frac{1}{(-s)^{\frac{p}{2p-3}}}e^{-\frac{r_i^2}{-32s}} ds\right)^{\frac{2p-3}{2p}}\nonumber\\
&
&\leq \left(\int_{-r_{i}^2}^{0}\|\chi_n(s)u_3(\cdot,s+r_n^2)\|_{L^p}^{\frac{2p}{2p-3}}\frac{1}{(-s)^{\frac{p}{2p-3}}}e^{-\frac{r_i^2}{-32s}}ds\right)^{\frac{2p-3}{2p}},
\een
where $\chi_n(s)=1-\mathbf{1}_{(-r_n^2,0]}(s)$. Now we denote $J_k=(-r_{k}^2,-r_{k+1}^2]$, then
\begin{eqnarray}\label{eq:bi2}
\begin{split}
&\left(\int_{-r_{i}^2}^{0}\|\chi_n(s)u_3(\cdot,s+r_n^2)\|_{L^p}^{\frac{2p}{2p-3}}\frac{1}{(-s)^{\frac{p}{2p-3}}}e^{-\frac{r_i^2}{-32s}}ds\right)^{\frac{2p-3}{2p}}\\
&\leq\sum_{k=i}^{\infty}\left(\int_{J_k}\|\chi_n(s)u_3(\cdot,s+r_n^2)\|_{L^p}^{\frac{2p}{2p-3}}\frac{1}{(-s)^{\frac{p}{2p-3}}}e^{-\frac{r_i^2}{-32s}}ds\right)^{\frac{2p-3}{2p}}\\
&\leq\sum_{k=i}^{\infty}\left(\int_{J_k}\|\chi_n(s)u_3(\cdot,s+r_n^2)\|_{L^p}^{\frac{2p}{2p-3}}ds\right)^{\frac{2p-3}{2p}}r_{k+1}^{-1}e^{-\frac{r_i^2}{32r_k^2}}.
\end{split}
\end{eqnarray}
On the other hand, we notice that for any $k\geq n$,
\begin{align*}
\int_{J_k}\|\chi_n(s)u_3(\cdot,s+r_n^2)\|_{L^p}^{\frac{2p}{2p-3}}ds=0,
\end{align*}
and for any $i\leq k\leq n-1$,
\begin{align*}
\int_{J_k}\|\chi_n(s)&u_3(\cdot,s+r_n^2)\|_{L^p}^{\frac{2p}{2p-3}}ds=\int_{-r_k^2+r_n^2}^{-r_{k+1}^2+r_n^2}\|u_3(\cdot,s)\|_{L^p}^{\frac{2p}{2p-3}}ds\leq \int_{-r_k^2}^{0}\|u_3(\cdot,s)\|_{L^p}^{\frac{2p}{2p-3}}ds,
\end{align*}
which along with \eqref{eq:Bi1} and \eqref{eq:bi2} implies
\begin{align*}
&\left(\int_{-r_{i+1}^2}^0\|u_3(\cdot,s)\|_{L^p}^{\frac{2p}{2p-3}}\frac{1}{(-s+r_n^2)^{\frac{p}{2p-3}}}e^{-\frac{r_i^2}{32(-s+r_n^2)}} ds\right)^{\frac{2p-3}{2p}}\\
&\leq 2\sum_{k=i}^{\infty}\left(\int_{-r_k^2}^{0}\|u_3(\cdot,s)\|_{L^p}^{\frac{2p}{2p-3}}ds\right)^{\frac{2p-3}{2p}}r_{k}^{-1}e^{-\frac{r_i^2}{32r_k^2}}\leq CB_i.
\end{align*}
Therefore, we obtain
\begin{eqnarray}\label{eq:I211}
\begin{split}
I_{211}\leq  C\sum_{i=0}^{n-1}(r_i^{-1}E_i(R))B_i.
\end{split}
\end{eqnarray}

Similarly, for the second term $I_{212}$, we have
\begin{align*}
I_{212}\leq& C\sum_{i=0}^{n-1}\left(\int_{-r_{i}^2}^{-r_{i+1}^2}\|u_3(\cdot,s)\|_{L^p}^{\frac{2p}{2p-3}}\frac{1}{(-s+r_n^2)^{\frac{2p}{2p-3}}} ds\right)^{\frac{2p-3}{2p}}\|u\|^2_{L^\frac{4p}{3}(-r_{i}^2,-r_{i+1}^2; L^{2p'}(U_i(R)) )}\\
\leq&C\sum_{i=0}^{n-1}(r_i^{-1}E_i(R))\left(\int_{-r_{i}^2}^{-r_{i+1}^2}\|u_3(\cdot,s)\|_{L^p}^{\frac{2p}{2p-3}}\frac{r_i^{\frac{2p}{2p-3}}}{(-s+r_n^2)^{\frac{2p}{2p-3}}} ds\right)^{\frac{2p-3}{2p}}.
\end{align*}
Due to $s\in[-r_i^2,-r^2_{i+1}]$, we then have
\beno
\frac{r_i^{\frac{2p}{2p-3}}}{(-s+r_n^2)^{\frac{2p}{2p-3}}}\leq C\frac{1}{(-s+r^2_n)^{\frac{p}{2p-3}}},
\eeno
which along the above estimate about $I_{212}$ implies
\begin{eqnarray}\label{eq:I212}
\begin{split}
I_{212}\leq&C\sum_{i=0}^{n-1}(r_i^{-1}E_i(R))\left(\int_{-r_{i}^2}^{-r_{i+1}^2}\|u_3(\cdot,s)\|_{L^p}^{\frac{2p}{2p-3}}\frac{1}{(-s+r_n^2)^{\frac{p}{2p-3}}} ds\right)^{\frac{2p-3}{2p}}\\
\leq& C\sum_{i=0}^{n-1}(r_i^{-1}E_i(R)) r_i^{-1}\left(\int_{-r_{i}^2}^{0}\|u_3(\cdot,s)\|_{L^p}^{\frac{2p}{2p-3}}ds\right)^{\frac{2p-3}{2p}}\leq C\sum_{i=0}^{n-1}(r_i^{-1}E_i(R))B_i.
\end{split}
\end{eqnarray}
Therefore, we obtain
\begin{align}\label{eq:I-21}
I_{21}\leq C\sum_{i=0}^{n-1}(r_i^{-1}E_i(R))B_i.
\end{align}
Similarly, we have
\begin{eqnarray}\label{eq:I-22}
\begin{split}
I_{22}&=\int_{Q_n(R)}|u|^2|u_3||\partial_3\Phi_n|\eta\psi dxds\\
&\leq Cr_n^{-1}E_n(R)\left(\int_{-r_n^2}^0\|u_3(\cdot,s)\|_{L^p}^{\frac{2p}{2p-3}}\frac{1}{(\sqrt{-s+r_n^2})^{\frac{2p}{2p-3}}}ds\right)^{\frac{2p-3}{2p}}\\
&\leq Cr_n^{-1}E_n(R)B_n.
\end{split}
\end{eqnarray}

About $I_{23}$, we have
\begin{eqnarray}\label{eq:I-23}
\begin{split}
I_{23}&=\int_{Q_0(R)}|u|^3|\nabla\psi|\Phi_n\eta dxds\leq C\frac{1}{(R-\rho)}\sum_{i=0}^{n}r_i^{-1}\int_{Q_i(R)}|u|^3dxds\\
&\leq C\frac{1}{(R-\rho)}\sum_{i=0}^{n}r_i^{-1}r^{1/2}_i\|u\|_{L^4(-r_i^2,0; L^3(U_i(R)))}^3\\
&\leq C(R-\rho)^{-1}\mathcal{E}^{\frac12} \sum_{i=0}^{n}r_i^{\frac12}(r_i^{-1}E_i(R)).
\end{split}
\end{eqnarray}
Combining \eqref{eq:I-21} to \eqref{eq:I-23}, we finally have
\begin{eqnarray}
\begin{split}
&\int_{-1}^t\int_{U_0(R)}|u|^2u\cdot\nabla(\Phi_n\eta\psi) dxds\\
&\leq C\sum_{i=0}^{n}(r_i^{-1}E_i(R))B_i+C(R-\rho)^{-1}\mathcal{E}^{\frac12} \sum_{i=0}^{n}r_i^{\frac12}(r_i^{-1}E_i(R))+C\mathcal{E}^{\frac{3}{2}}.
\end{split}
\end{eqnarray}
This finishes the proof of \eqref{eq:I2}. We are left with the proof of 
$$
\sum_{i=0}^{\infty}B_i\leq \|u_3(\cdot,s)\|_{L^{q,1}_tL^p_x}.
$$
 We notice that
\begin{eqnarray}\label{eq:sum-B-i}
\begin{split}
\sum_{i=0}^{\infty}B_i\leq& \sum_{i=0}^{\infty}\sum_{k=i}^{\infty}\left(\int_{-r_k^2}^{0}\|u_3(\cdot,s)\|_{L^p}^{\frac{2p}{2p-3}}ds\right)^{\frac{2p-3}{2p}}r_{k}^{-1}e^{-\frac{r_i^2}{32r_k^2}}\\
&+\sum_{i=0}^{\infty}r_i^{-1}\left(\int_{-r_{i}^2}^{0}\|u_3(\cdot,s)\|_{L^p}^{\frac{2p}{2p-3}} ds\right)^{\frac{2p-3}{2p}}.
\end{split}
\end{eqnarray}
About the first term on the right hand side, we have
\beno
&&\sum_{i=0}^{\infty}\sum_{k=i}^{\infty}\left(\int_{-r_k^2}^{0}\|u_3(\cdot,s)\|_{L^p}^{\frac{2p}{2p-3}}ds\right)^{\frac{2p-3}{2p}}r_{k}^{-1}e^{-\frac{r_i^2}{32r_k^2}}\\
&&\leq \sum_{k=0}^{\infty}\left(\int_{-r_k^2}^0\|u_3(\cdot,s)\|_{L^p}^{\frac{2p}{2p-3}}ds\right)^{\frac{2p-3}{2p}}\sum_{i=0}^{k}\frac{1}{r_k}e^{-\frac{r_i^2}{32r_k^2}} \\
&&\leq C\sum_{k=0}^{\infty}\frac{1}{r_k}\left(\int_{-r_k^2}^0\|u_3(\cdot,s)\|_{L^p}^{\frac{2p}{2p-3}}ds\right)^{\frac{2p-3}{2p}}\leq C\sum_{k=0}^{\infty}{r_k}^{1-\frac3p-\frac{2}{\tilde{q}}}\left(\int_{-r_k^2}^0\|u_3(\cdot,s)\|_{L^p}^{\tilde{q}}ds\right)^{\frac{1}{\tilde{q}}},
\eeno
where
\beno
\frac{2p}{2p-3}\leq \tilde{q}<q=\frac{2p}{p-3}, \quad 3<p<\infty.
\eeno
By Lemma \ref{lem:lorentz}, we obtain
\begin{align}
\sum_{i=0}^{\infty}\sum_{k=i}^{\infty}\left(\int_{-r_k^2}^{0}\|u_3(\cdot,s)\|_{L^p}^{\frac{2p}{2p-3}}ds\right)^{\frac{2p-3}{2p}}r_{k}^{-1}e^{-\frac{r_i^2}{32r_k^2}}
\leq C\|u_3(\cdot,s)\|_{L^{q,1}_tL^p_x}.
\end{align}
On the other hand, the control of the second term of \eqref{eq:sum-B-i} is quite obvious, since
\beno
\sum_{i=0}^{\infty}r_i^{-1}\left(\int_{-r_{i}^2}^{0}\|u_3(\cdot,s)\|_{L^p}^{\frac{2p}{2p-3}} ds\right)^{\frac{2p-3}{2p}}
\leq C\|u_3(\cdot,s)\|_{L^{q,1}_tL^p_x}
\eeno
by using H\"{o}lder inequality and Lemma \ref{lem:lorentz} again.

Hence, we obtain
\beno
\sum_{i=0}^{\infty}B_i\leq C\|u_3(\cdot,s)\|_{L^{q,1}_tL^p_x}.
\eeno
The proof of this lemma is completed.
\end{proof}

\subsection{Estimate for the pressure}
This part is devoted to show the estimates about the third term on the right side of  \eqref{eq:energy w}, which is related to the control of the pressure $\pi$. We first decompose the pressure $\pi$ as $\pi=\pi_0+\pi_h$, where
\beno
-\triangle\pi_0=\partial_i\partial_j(u_iu_j\chi_{Q_0(R)})\quad \text{in}\quad\mathbb{R}^3\times (-1,0).
\eeno
Hence $\pi_h$ is harmonic in $Q_0(R)$.
Then we have
\beno
 &&\int_{-1}^t\int_{U_0(R)}\pi u\cdot \nabla(\Phi_n\eta\psi) dxds\\
 &&=\int_{-1}^t\int_{U_0(R)}\pi_0 u\cdot \nabla(\Phi_n\eta\psi) dxds+\int_{-1}^t\int_{U_0(R)}\pi_h u\cdot \nabla(\Phi_n\eta\psi) dxds\\
 &&=\int_{-1}^t\int_{U_0(R)}\pi_0 u_3 \partial_3\Phi_n(\eta\psi) dxds+\int_{-1}^t\int_{U_0(R)}\pi_0  \Phi_n u\cdot \nabla (\eta\psi) dxds\\
 &&\quad-\int_{-1}^t\int_{U_0(R)}\nabla \pi_h \cdot u(\Phi_n\eta\psi) dxds.
\eeno
The purpose of rest of this part is to show the controls of the three terms on the right side of the above equation.

\begin{lemma}\label{prop:pi-01}
Let $(u,\pi)$ be a suitable weak solution of (\ref{eq:NS}) in $\R^3\times (-1,0)$. Suppose that  $(u,\pi)$ satisfies the same assumption of Theorem \ref{thm:main2}. Then  there exists a positive series $\{C_i\}_{i\in\mathbb{N}}$ with $\sum_{i=0}^{\infty}C_i\leq \|u_3(\cdot,s)\|_{L^{q,1}_tL^p_x}$ such that for any $n\in\mathbb{N}$ we have
\begin{align*}
\int_{-1}^t\int_{U_0(R)}u_3\pi_0(\partial_3\Phi_n\eta\psi) dxds\leq C\sum_{i=0}^n (r_i^{-1} E_i(R))C_i.
\end{align*}
\end{lemma}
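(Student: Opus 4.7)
The plan is to follow the same shell-decomposition strategy used to bound $I_{21}$ and $I_{22}$ in Lemma \ref{prop:I1I2}, replacing the space-time bound on $|u|^2$ by a Calder\'on--Zygmund estimate on $\pi_0$. Using the pointwise Gaussian bound $|\partial_3\Phi_n(x,s)| \le C|x_3|(-s+r_n^2)^{-3/2}e^{-x_3^2/(4(-s+r_n^2))}$, I would first split
\[
\int_{-1}^t\!\!\int_{U_0(R)}|u_3\pi_0\partial_3\Phi_n\eta\psi|\,dxds \le \sum_{i=0}^{n-1}\int_{A_i(R)}|u_3\pi_0\partial_3\Phi_n\eta\psi|\,dxds + \int_{Q_n(R)}|u_3\pi_0\partial_3\Phi_n\eta\psi|\,dxds,
\]
and further decompose each shell $A_i(R)$ into the two subregions $\{r_{i+1}\le|x_3|\le r_i,\;-r_{i+1}^2\le s\le 0\}$ and $\{|x_3|\le r_i,\;-r_i^2\le s\le -r_{i+1}^2\}$ exactly as in the treatment of $I_{21}$.

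On each piece I would apply H\"older's inequality in the spatial variables with conjugate exponents $(p,p')$, $p'=p/(p-1)$, placing $u_3$ into $L^p$ and $\pi_0$ into $L^{p'}$. To obtain a factor of $E_i(R)$ rather than the global energy $\mathcal{E}$, I would split $\pi_0 = \pi_{0,i}^{\mathrm{near}} + \pi_{0,i}^{\mathrm{far}}$, where the near piece solves $-\Delta \pi_{0,i}^{\mathrm{near}} = \partial_j\partial_k(u_ju_k\chi_{U_i^*(R)\cap Q_0(R)})$ for a slight dilation $U_i^*(R)$ of $U_i(R)$. The Calder\'on--Zygmund inequality yields
\[
\|\pi_{0,i}^{\mathrm{near}}(\cdot,s)\|_{L^{p'}(\R^3)} \le C\|u(\cdot,s)\|_{L^{2p'}(U_i^*(R))}^2,
\]
and combined with the interpolation \eqref{eq:sobolev ml} this produces the desired $r_i^{-1}E_i(R)$ factor after time integration. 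The far part is harmonic in a neighborhood of $\supp(\eta\psi)\cap A_i(R)$, and by a standard Poisson-kernel / mean-value estimate it is pointwise dominated there by $Cr_i^{-3}\|u\|_{L^2(Q_0(R))}^2$, contributing a comparable $r_i^{-1}E_i(R)$-type bound.

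Once the spatial estimates are in hand, the remainder of the argument parallels the construction of $B_i$ in \eqref{eq:Bi1}--\eqref{eq:I212}: I would apply H\"older in time with exponents $\bigl(\tfrac{2p}{2p-3},\tfrac{2p}{3}\bigr)$ on $u_3$ and $u$ respectively, use the Gaussian weight to trade powers of $r_i$ against $(-s+r_n^2)$, and package the $u_3$-dependent time integrals into
\[
C_i = \sum_{k\ge i}\Bigl(\int_{-r_k^2}^0\|u_3(\cdot,s)\|_{L^p}^{\frac{2p}{2p-3}}\,ds\Bigr)^{\frac{2p-3}{2p}} r_k^{-1}e^{-r_i^2/(32r_k^2)} + r_i^{-1}\Bigl(\int_{-r_i^2}^0\|u_3(\cdot,s)\|_{L^p}^{\frac{2p}{2p-3}}\,ds\Bigr)^{\frac{2p-3}{2p}},
\]
which has exactly the structure of $B_i$. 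The summability $\sum_{i=0}^\infty C_i \le C\|u_3\|_{L^{q,1}_tL^p_x}$ then follows from the same application of Lemma \ref{lem:lorentz} used at the end of the proof of Lemma \ref{prop:I1I2}.

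The main obstacle I expect to face is the nonlocality of $\pi_0$. Producing a genuine $E_i(R)$-factor (rather than a coarse $E_0(R)\le\mathcal{E}$-factor) requires the careful near/far splitting together with a decay estimate on the harmonic far-field piece that behaves well in $i$; otherwise, the global bound $\|\pi_0(\cdot,s)\|_{L^{p'}(\R^3)}\le C\|u(\cdot,s)\|_{L^{2p'}(B_0(R))}^2$ would only give $\mathcal{E}$-factors, too weak to close the iteration on $E_n(R)$ that is needed downstream.
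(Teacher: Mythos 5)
Your near-field treatment is essentially the paper's: the paper decomposes $\pi_0=\sum_j\pi_{0,j}$ with $\pi_{0,j}=T(f\phi_j)$ over dyadic space-time shells, collapses the ``source inside the test region'' part to $\Pi_{0,k}=T(\chi_k f)$, and bounds it by Calder\'on--Zygmund exactly as you propose, recovering $(r_i^{-1}E_i(R))B_i$ via the same manipulations as for $I_{211}$ and $I_{212}$. The gap is in your far field. The bound you claim, $|\pi_{0,i}^{\mathrm{far}}|\le Cr_i^{-3}\|u\|_{L^2}^2$ on $U_i(R)$, is the naive kernel estimate and is far too lossy: inserting it into $\int_{A_i}|u_3||\pi^{\mathrm{far}}||\partial_3\Phi_n|$ together with $|\partial_3\Phi_n|\le Cr_i^{-2}$ and H\"older in $x'$ and $t$ produces a coefficient of order $r_i^{-2-\frac1p-\frac{2}{\tilde q}}$ in front of $\bigl(\int_{-r_i^2}^0\|u_3\|_{L^p}^{\tilde q}ds\bigr)^{1/\tilde q}$, which exceeds the summable weight $r_i^{1-\frac3p-\frac{2}{\tilde q}}$ of Lemma \ref{lem:lorentz} by a divergent factor $r_i^{-3+\frac2p}$. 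Moreover the energy factor it carries is $\mathcal{E}$ (or $E_0(R)$), not $E_i(R)$, so this contribution cannot be written in the required form $\sum_i(r_i^{-1}E_i(R))C_i$ with $\sum_iC_i\lesssim\|u_3\|_{L^{q,1}_tL^p_x}$.

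The paper closes exactly this gap with two ingredients your sketch is missing. First, the far field is not treated as a single lump: it is decomposed scale by scale (the term $II_{23}$ is a double sum over the source scale $j$ and the test scale $k\ge j+4$), and the energy factor is attributed to the \emph{source} scale, $E_j(R)$, rather than to the test scale. Second, and crucially, since $\pi_{0,j}$ is harmonic in the slab $|x_3|<r_{j+2}$, Lemma \ref{lem:harmonic estimate} yields the anisotropic gain $\|\pi_{0,j}(\cdot,s)\|_{L^{p'}(U_k(R))}\le Cr_k^{1/p'}r_j^{2/p'-3/\ell}\|\pi_{0,j}(\cdot,s)\|_{L^{\ell}(\R^3)}$: the factor $r_k^{1/p'}$ records that $U_k(R)$ is a slab of thickness $r_k$, and it is precisely what makes the sum over $k$ converge after choosing $\tilde q$ close to $q$ and $\ell$ close to $1$. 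This is also why the paper's $C_i$ is not just $B_i$ but $B_i+r_i^{1-\frac3p-\frac{2}{\tilde q}}\bigl(\int_{-r_i^2}^0\|u_3\|_{L^p}^{\tilde q}ds\bigr)^{1/\tilde q}$; the second term is the price of the far field. Without a quantitative slab-type estimate of this kind (a plain mean-value inequality will not do), your argument does not close.
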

We can also  represent $\pi_0$ in the following way. For any $f_{ij}\in L^p(Q_0(R))$ with $1<p<\infty$ and $i,j=1,2,3$, we define
\begin{align*}
T(f)(x,t)=\text{P.V.}\int_{\mathbb{R}^3} K(x-y):f(y,t)\chi_{U_0(R)}(y)dy,\quad(x,t)\in\mathbb{R}^3\times(-1,0),
\end{align*}
with the kernel
\begin{align*}
K_{ij}=\partial_i\partial_j\big(\frac{1}{4\pi|x|}\big),\quad i,j=1,2,3.
\end{align*}
Then we have $\pi_0=T(u_iu_j\chi_{Q_0(R)})$.
\begin{proof}
Let $(u,\pi)$ be the solution satisfying the condition in Lemma \ref{prop:pi-01}. We first introduce the following notations and definition.
For  $j\in \mathbb{N}_0$ let $\chi_j=\chi_{Q_j(R)}.$
Moreover, we set
\beno
\phi_j=\left\{\begin{array}{lll}
\chi_{j}-\chi_{j+1},\quad {\rm if }\quad j=0,1,\cdots,n-1;\\
\chi_{n},\quad {\rm if }\quad j=n.
\end{array}\right.
\eeno
It is clear that
\beno
\sum_{j=0}^{n}\phi_j=(\chi_0-\chi_1)+\cdots+\chi_{n}=1 \Rightarrow f=\sum_{j=0}^{n}f\phi_j \quad {\rm in }~Q_0(R).
\eeno
Taking $f=u_iu_j\chi_{Q_0(R)}$, it holds that
\beno
\pi_0=T(f)=\sum_{j=0}^{n}T(f\phi_j)=\sum_{j=0}^{n}\pi_{0,j}.
\eeno
Then we have
\begin{align*}
&\int_{-1}^t\int_{U_0(R)}u_3\pi_0(\partial_3\Phi_n\eta\psi) dxds\\
&= \sum_{k=0}^{n}\int_{-1}^t\int_{U_0(R)}\pi_0 u_3\partial_3\Phi_n \phi_k\eta \psi dxds= \sum_{j=0}^{n}\sum_{k=0}^{n}\int_{-1}^t\int_{U_0(R)}\pi_{0,j} u_3\partial_3\Phi_n \phi_k\eta \psi dxds\\
&=\sum_{k=0}^{n}\sum_{j=k}^{n}\int_{-1}^t\int_{U_0(R)}\pi_{0,j} u_3\partial_3\Phi_n \phi_k\eta\psi dxds+\sum_{j=0}^{n}\sum_{k=j+1}^{n}\int_{-1}^t\int_{U_0(R)}\pi_{0,j} u_3\partial_3\Phi_n \phi_k\eta \psi dxds\\
&=II_1+II_2.
\end{align*}
We now deal with the term $II_1$. By the definitions of cut-off function $\phi_i$ and singular operator $T$, $II_1$ can be written as
\beno
II_1=\sum_{k=0}^{n}\int_{-1}^t\int_{U_0(R)}\Pi_{0,k} u_3\partial_3\Phi_n \phi_k\eta\psi dxds
\eeno
with
\beno
\Pi_{0,k}=\left\{\begin{array}{lll}
\pi_0,\quad {\rm if }\quad k=0;\\
T(\chi_{k} f),\quad {\rm if }\quad k=1,\cdots,n.
\end{array}\right.
\eeno
We first notice that
\begin{align*}
II_1\leq &C
\sum_{i=0}^{n-1}\int_{A_i(R)\cap\{r_{i+1}\leq |x_3|\leq r_i, -r_{i+1}^2\leq s\leq 0\}}|\Pi_{0,i}||u_3|\frac{|x_3|}{(\sqrt{(-s+r_n^2)})^3}e^{-\frac{x_3^2}{4(-s+r_n^2)}}\eta dxds\\
&+C\sum_{i=0}^{n-1}\int_{A_i(R)\cap \{|x_3|\leq r_i, -r_{i}^2\leq s\leq -r_{i+1}^2\}}|\Pi_{0,i}||u_3|\frac{|x_3|}{(\sqrt{(-s+r_n^2)})^3}e^{-\frac{x_3^2}{4(-s+r_n^2)}}\eta dxds\\
&+C\int_{Q_n(R)}|\Pi_{0,n}||u_3|r_n^{-2} dxds\\
\leq &C\sum_{i=0}^{n-1}\int_{-r_{i+1}^2}^0\|u(\cdot,t)\|_{L^{2p'}(U_i(R))}^2\|u_3(\cdot,s)\|_{L^p}\frac{1}{(-s+r_n^2)}e^{-\frac{r_i^2}{32(-s+r_n^2)}} ds\\
&+C\sum_{i=0}^{n-1}r_i^{-2}\left(\int_{-r_{i}^2}^{-r_{i+1}^2}\|u_3(\cdot,s)\|_{L^p}^{\frac{2p}{2p-3}} ds\right)^{\frac{2p-3}{2p}}\|u\|^2_{L^\frac{4p}{3}(-r_{i}^2,-r_{i+1}^2; L^{2p'}(U_i(R)) )}\\
&+Cr_n^{-2}\left(\int_{-r_{n}^2}^{0}\|u_3(\cdot,s)\|_{L^p}^{\frac{2p}{2p-3}} ds\right)^{\frac{2p-3}{2p}}\|u\|^2_{L^\frac{4p}{3}(-r_{n}^2,0; L^{2p'}(U_n(R)) )}
\end{align*}
provided that the kernel of $T$ is a Calder\'on-Zygmund kernel such that for any $t\in(-1,0)$,
\begin{align*}
\|\Pi_{0,k}(\cdot,t)\|_{L^{P'}(\mathbb{R}^3)}\leq C\|u_iu_j(\cdot,t)\chi_{k}(\cdot,t)\|_{L^{p'}}\leq \|u(\cdot,t)\|^2_{L^{2p'}}.
\end{align*}
By a similar argument leading to \eqref{eq:I211} and \eqref{eq:I212}, we obtain
\begin{eqnarray}\label{eq:pressure-II1}
\begin{split}
II_1\leq&C\sum_{i=0}^{n}(r_i^{-1}E_i(R))B_i.
\end{split}
\end{eqnarray}

Now we turn to show the control the $II_2$, which is much more complicated. We first have
\begin{align*}
II_2= &\sum_{j=n-2}^{n}\sum_{k=j}^{n}\int_{-1}^t\int_{U_0(R)}\pi_{0,j} u_3\partial_3\Phi_n \phi_k\eta\psi dxds\\
&+\sum_{j=0}^{n-3}\sum_{k=j}^{j+3}\int_{-1}^t\int_{U_0(R)}\pi_{0,j} u_3\partial_3\Phi_n\phi_k\eta\psi dxds\\
&+\sum_{j=0}^{n-3}\sum_{k=j+4}^{n}\int_{-1}^t\int_{U_0(R)}\pi_{0,j} u_3\partial_3\Phi_n\phi_k\eta\psi dxds=II_{21}+II_{22}+II_{23}.
\end{align*}
Using the property of singular operator $T$ and a similar argument as above, we get
\begin{align*}
II_{21}\leq &C\sum_{i=n-2}^{n}\sum_{k=i}^{n}\int_{Q_k(R)}|\pi_{0,i}||u_3|\frac{|x_3|}{(\sqrt{(-s+r_n^2)})^3}e^{-\frac{x_3^2}{4(-s+r_n^2)}}\eta\psi dxds\\
\leq &C\sum_{i=n-2}^{n}\sum_{k=i}^{n}\left(\int_{-r_{k}^2}^{0}\|u_3(\cdot,s)\|_{L^p}^{\frac{2p}{2p-3}} ds\right)^{\frac{2p-3}{2p}}r_n^{-2}\|u\|^2_{L^\frac{4p}{3}(-r_{k}^2,0; L^{2p'}(U_i(R)) )}\\
\leq& C\sum_{i=0}^{n}(r_i^{-1}E_i(R))B_i,
\end{align*}
and
\begin{align*}
II_{22}\leq &C
\sum_{i=0}^{n-3}\sum_{k=i}^{i+3}\int_{A_k(R)\cap\{r_{k+1}\leq |x_3|\leq r_k, -r_{k+1}^2\leq s\leq 0\}}|\pi_{0,i}||u_3|\frac{|x_3|}{(\sqrt{(-s+r_n^2)})^3}e^{-\frac{x_3^2}{4(-s+r_n^2)}}\eta\psi dxds\\
&+C\sum_{i=0}^{n-3}\sum_{k=i}^{i+3}\int_{A_k(R)\cap \{|x_3|\leq r_k, -r_{k}^2\leq s\leq -r_{k+1}^2\}}|\pi_{0,i}||u_3|\frac{|x_3|}{(\sqrt{(-s+r_n^2)})^3}e^{-\frac{x_3^2}{4(-s+r_n^2)}}\eta\psi dxds\\
\leq &C\sum_{i=0}^{n-3}\sum_{k=i}^{i+3}\int_{-r_{k+1}^2}^0\|u(\cdot,t)\|_{L^{2p'}(U_i(R))}^2\|u_3(\cdot,s)\|_{L^p}\frac{1}{(-s+r_n^2)}e^{-\frac{r_k^2}{32(-s+r_n^2)}} ds\\
&+C\sum_{i=0}^{n-3}\sum_{k=i}^{i+3}\left(\int_{-r_{k}^2}^{-r_{k+1}^2}\|u_3(\cdot,s)\|_{L^p}^{\frac{2p}{2p-3}}\frac{1}{(-s)^{\frac{2p}{2p-3}}} ds\right)^{\frac{2p-3}{2p}}\|u\|^2_{L^\frac{4p}{3}(-r_{k}^2,-r_{k+1}^2; L^{2p'}(U_i(R)) )}\\
\leq& C\sum_{i=0}^{n}(r_i^{-1}E_i(R))\left(\int_{-r_{i+1}^2}^0\|u_3(\cdot,s)\|_{L^p}^{\frac{2p}{2p-3}}\frac{1}{(-s+r_n^2)^{\frac{p}{2p-3}}}e^{-\frac{r_i^2}{32(-s+r_n^2)}} ds\right)^{\frac{2p-3}{2p}}\\
&+C\sum_{i=0}^{n}(r_i^{-1}E_i(R))\left(\int_{-r_{i}^2}^{-r_{i+1}^2}\|u_3(\cdot,s)\|_{L^p}^{\frac{2p}{2p-3}}\frac{1}{(-s)^{\frac{p}{2p-3}}} ds\right)^{\frac{2p-3}{2p}}
\end{align*}
Hence, as in \eqref{eq:I211} and \eqref{eq:I212} we obtain
\begin{eqnarray}\label{eq:pressure-II2122}
\begin{split}
II_{21}+II_{22}\leq C\sum_{i=0}^{n}(r_i^{-1}E_i(R))B_i.
\end{split}
\end{eqnarray}
At last, we estimate the term $II_{23}$ as
\beno
II_{23}=\sum_{j=0}^{n-3}\sum_{k=j+4}^{n}\int_{-1}^t\int_{U_0(R)}\pi_{0,j} u_3\partial_3\Phi_n \phi_k\eta\psi dxds.
\eeno
By the definition of $\pi_{0,j}$, which is harmonic in $\mathbb{R}^2\times (-r_{j+2},r_{j+2})\times (-r_{j+2}^2,0)$, with the help of Lemma  \ref{lem:harmonic estimate} we get
\beno
\|\pi_{0,j}(\cdot,s)\|_{L^{p'}(U_k(R))}\leq C r_k^{\frac{1}{p'}}r_j^{\frac{2}{p'}-\frac{3}{\ell}}\|\pi_{0,j}(\cdot,s)\|_{L^{\ell}(\mathbb{R}^3)}.
\eeno
Hence, it follows that
\begin{align*}
II_{23}\leq &C
\sum_{i=0}^{n-3}\sum_{k=i+4}^{n-1}\int_{A_k(R)\cap\{r_{k+1}\leq |x_3|\leq r_k, -r_{k+1}^2\leq s\leq 0\}}|\pi_{0,i}||u_3|\frac{|x_3|}{(\sqrt{(-s+r_n^2)})^3}e^{-\frac{x_3^2}{4(-s+r_n^2)}}\eta dxds\\
&+C\sum_{i=0}^{n-3}\sum_{k=i+4}^{n-1}\int_{A_k(R)\cap \{|x_3|\leq r_k, -r_{k}^2\leq s\leq -r_{k+1}^2\}}|\pi_{0,i}||u_3|\frac{|x_3|}{(\sqrt{(-s+r_n^2)})^3}e^{-\frac{x_3^2}{4(-s+r_n^2)}}\eta dxds\\
&+C\sum_{i=0}^{n-3}\int_{Q_n(R)}|\pi_{0,i}||u_3|\frac{|x_3|}{(\sqrt{(-s+r_n^2)})^3}\eta dxds\\
\leq &C\sum_{i=0}^{n-3}\sum_{k=i+4}^{n-1}\int_{-r_{k+1}^2}^0r_k^{\frac{1}{p'}}r_i^{\frac{2}{p'}-\frac{3}{\ell}}\|u(\cdot,t)\|_{L^{2\ell}(U_i(R))}^2\|u_3(\cdot,s)\|_{L^p}\frac{1}{(-s+r_n^2)}e^{-\frac{r_k^2}{32(-s+r_n^2)}} ds\\
&+C\sum_{i=0}^{n-3}\sum_{k=i+4}^{n}\left(\int_{-r_{k}^2}^{0}\|u_3(\cdot,s)\|_{L^p}^{\frac{2p}{2p-3}} ds\right)^{\frac{2p-3}{2p}}r_k^{\frac{1}{p'}-2}r_i^{\frac{2}{p'}-\frac{3}{\ell}}\|u\|^2_{L^\frac{4p}{3}(-r_{k}^2,0; L^{2\ell}(U_i(R)) )}\\
\leq &C\sum_{i=0}^{n-3}\sum_{k=i+4}^{n}\int_{-r_{k+1}^2}^0r_k^{\frac{1}{p'}}r_i^{\frac{2}{p'}-\frac{3}{\ell}}\|u(\cdot,t)\|_{L^{2\ell}(U_i(R))}^2\|u_3(\cdot,s)\|_{L^p}
\frac{1}{(-s+r_n^2)}e^{-\frac{r_k^2}{32(-s+r_n^2)}} ds\\
&+C\sum_{i=0}^{n-3}\sum_{k=i+4}^{n}\left(\int_{-r_{k}^2}^{0}\|u_3(\cdot,s)\|_{L^p}^{\frac{2p}{2p-3}} ds\right)^{\frac{2p-3}{2p}}\cdot r_k^{\frac{1}{p'}-2}r_i^{\frac{2}{p'}-\frac{3}{\ell}}r_k^{\frac3p-3+\frac{3}{\ell}}\|u\|^2_{L^\frac{4\ell}{3\ell-3}(-r_{k}^2,0; L^{2\ell}(U_i(R)) )}\\
=&II_{1}'+II_{2}',
\end{align*}
where $1<\ell<p'$. Note that
\begin{align*}
II_{2}'
\leq &C\sum_{i=0}^{n-3}\sum_{k=i+4}^{n}\left(\int_{-r_{k}^2}^{0}\|u_3(\cdot,s)\|_{L^p}^{\tilde{q}}ds\right)^{\frac{1}{\tilde{q}}}
r_k^{2-\frac3p-\frac{2}{\tilde{q}}}r_k^{\frac{1}{p'}-2}r_i^{\frac{2}{p'}-\frac{3}{\ell}}r_k^{\frac3p-3+\frac{3}{\ell}}
E_i(R)\\
\leq &C\sum_{i=0}^{n-3}r_i^{\frac{2}{p'}-\frac{3}{\ell}}\left(\int_{-r_{i}^2}^{0}\|u_3(\cdot,s)\|_{L^p}^{\tilde{q}}ds\right)^{\frac{1}{\tilde{q}}}
\sum_{k=i+4}^{n}r_k^{2-\frac3p-\frac{2}{\tilde{q}}}r_k^{\frac{1}{p'}-2}
r_k^{\frac3p-3+\frac{3}{\ell}}
E_i(R),
\end{align*}
where we choose $\tilde{q}$ close to $q$ and $\ell$ close to $1$ such that
\beno
2-\frac3p-\frac{2}{\tilde{q}}+\frac{1}{p'}-2+\frac3p-3+\frac{3}{\ell}>0.
\eeno
Consequently, we have
\beno
II_{2}'\leq C\sum_{i=0}^{n-3}r_i^{1-\frac3p-\frac{2}{\tilde{q}}-1}\left(\int_{-r_{i+4}^2}^{0}\|u_3(\cdot,s)\|_{L^p}^{\tilde{q}}ds\right)^{\frac{1}{\tilde{q}}}E_i(R).
\eeno
Thanks to
\beno
\frac{1}{(-s+r_n^2)}e^{-\frac{r_k^2}{32(-s+r_n^2)}}\leq C r_k^{-2},
\eeno
it follows from H\"{o}lder inequality that
\begin{align*}
II_{1}'\leq &C\sum_{i=0}^{n-3}\sum_{k=i+4}^{n}\left(\int_{-r_{k}^2}^{0}\|u_3(\cdot,s)\|_{L^p}^{\frac{2p}{2p-3}}ds\right)^{\frac{2p-3}{2p}}
r_k^{\frac{1}{p'}-2}r_i^{\frac{2}{p'}-\frac{3}{\ell}}r_k^{\frac3p-3+\frac{3}{\ell}}
E_i(R)\\
\leq &C\sum_{i=0}^{n-3}r_i^{\frac{2}{p'}-\frac{3}{\ell}}\left(\int_{-r_{i+4}^2}^{0}\|u_3(\cdot,s)\|_{L^p}^{\tilde{q}}ds\right)^{\frac{1}{\tilde{q}}}\sum_{k=i+4}^{n}r_k^{1-\frac3p-\frac{2}{\tilde{q}}+1}r_k^{\frac{1}{p'}-2}
r_k^{\frac3p-3+\frac{3}{\ell}}
E_i(R).
\end{align*}
Hence, we obtain
\beno
II_{1}'\leq C\sum_{i=0}^{n-3}r_i^{1-\frac3p-\frac{2}{\tilde{q}}-1}\left(\int_{-r_{i+4}^2}^{0}\|u_3(\cdot,s)\|_{L^p}^{\tilde{q}}ds\right)^{\frac{1}{\tilde{q}}}E_i(R).
\eeno
Therefore, from above two estimates, we deduce that
\begin{align}\label{eq:pressure-II23}
II_{23}\leq C\sum_{i=0}^{n-3}r_i^{1-\frac3p-\frac{2}{\tilde{q}}-1}\left(\int_{-r_{i+4}^2}^{0}\|u_3(\cdot,s)\|_{L^p}^{\tilde{q}}ds\right)^{\frac{1}{\tilde{q}}}E_i(R).
\end{align}

We denote
\begin{align}\label{eq:def-Ci}
C_i=B_i+r_i^{1-\frac3p-\frac{2}{\tilde{q}}}\left(\int_{-r_{i}^2}^{0}\|u_3(\cdot,s)\|_{L^p}^{\tilde{q}}ds\right)^{\frac{1}{\tilde{q}}},
\end{align}
which combined with \eqref{eq:pressure-II1}, \eqref{eq:pressure-II2122} and \eqref{eq:pressure-II23} implies that
\begin{align*}
\int_{-1}^t\int_{U_0(R)}u_3\pi_0(\partial_3\Phi_n\eta\psi) dxds\leq C\sum_{i=0}^n (r_i^{-1} E_i(R))C_i.
\end{align*}
The bounds of $C_i$ is guaranteed by $B_i$ and Lemma \ref{lem:lorentz}.
\end{proof}

\begin{lemma}\label{prop:pi-02}
Let $(u,\pi)$ be a suitable weak solution of (\ref{eq:NS}) in $\R^3\times (-1,0)$. Suppose that $(u,\pi)$ satisfies the same assumption of Theorem \ref{thm:main2}.  Then we have 
\beno
\Big|\int_{-1}^t\int_{U_0(R)}\pi_0  \Phi_n u\cdot \nabla (\eta\psi) dxds\Big|\leq C\mathcal{E}^{\frac32}+C(R-\rho)^{-1}\mathcal{E}^{\frac12} \sum_{i=0}^{n}r_i^{\frac12}(r_i^{-1}E_i(R)),
\eeno
and
\beno
\Big|\int_{-1}^t\int_{U_0(R)}\nabla \pi_h \cdot u(\Phi_n\eta\psi) dxds\Big|\leq \frac{C}{R-\rho}\mathcal{E}^{\frac32}.
\eeno
\end{lemma}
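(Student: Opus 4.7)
The plan is to prove the two bounds in sequence, both via the dyadic decomposition in $x_3$ already employed in Lemmas \ref{prop:I1I2} and \ref{prop:pi-01}, using Calder\'on--Zygmund estimates for $\pi_0$ and harmonic interior estimates for $\pi_h$.

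For the first inequality, I would decompose $\nabla(\eta\psi)=\psi\nabla\eta+\eta\nabla\psi$. On $\mathrm{supp}(\nabla\eta)\subset\{|x_3|\ge 1/2\}\cup\{t\le -1/4\}$ the kernel $\Phi_n$ is uniformly bounded by a dimensional constant, so combining with the global bound $\|\pi_0\|_{L^{3/2}(Q_0(R))}\le C\|u\|_{L^3(Q_0(R))}^2\le C\mathcal{E}$ (Calder\'on--Zygmund together with \eqref{eq:sobolev ml}) and $\|u\|_{L^3(Q_0(R))}\le C\mathcal{E}^{1/2}$, H\"older produces the $C\mathcal{E}^{3/2}$ contribution. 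For the $\eta\nabla\psi$ part, $|\nabla\psi|\le C(R-\rho)^{-1}$ is supported in $B'(R)\setminus B'(\rho)$; decomposing $U_0(R)$ into annuli $A_i(R)$ plus the inner slab $Q_n(R)$ and using $\Phi_n\le Cr_i^{-1}$ from \eqref{eq:Phi-n Aj}--\eqref{eq:Phi-n Qn}, the integral is controlled by
\[
\frac{C}{R-\rho}\sum_{i=0}^{n}r_i^{-1}\int_{Q_i(R)}|\pi_0||u|\,dxds.
\]
The scale-$i$ integral is handled by a H\"older pairing with exponents $(L^{4/3}_tL^{3/2}_x,\,L^4_tL^3_x)$: the factor $\|u\|_{L^4_tL^3_x(Q_i)}\le CE_i(R)^{1/2}$ is furnished by \eqref{eq:sobolev ml}, and $\|\pi_0\|_{L^{4/3}_tL^{3/2}_x(Q_i)}\le Cr_i^{1/2}\mathcal{E}$ is obtained by combining Calder\'on--Zygmund with H\"older in time on the slab $(-r_i^2,0)$ (gaining the factor $r_i^{1/2}$) and the global bound $\|u\|_{L^4_tL^3_x(Q_0(R))}\le C\mathcal{E}^{1/2}$. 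Arranging the factors as in the $I_{23}$ estimate of Lemma \ref{prop:I1I2} yields the claimed sum.

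For the second inequality, I would exploit that $\pi_h(\cdot,t)$ is harmonic on $U_0(R)$ at a.e.\ $t$. A Bernstein-type interior estimate for harmonic functions gives
\[
\|\nabla\pi_h(\cdot,t)\|_{L^{3/2}(U_0(\rho))}\le \frac{C}{R-\rho}\|\pi_h(\cdot,t)\|_{L^{3/2}(U_0(R))},
\]
and since $\pi_h=\pi-\pi_0$ one has $\|\pi_h\|_{L^{3/2}(Q_0(R))}\le \|\pi\|_{L^{3/2}(Q_0(R))}+C\|u\|_{L^3(Q_0(R))}^2\le C\mathcal{E}$. Pairing $\nabla\pi_h$ (in $L^{3/2}_{t,x}$) with $u$ (in $L^3_{t,x}$, bounded by $\mathcal{E}^{1/2}$) via H\"older, and absorbing the $\Phi_n$-weight by the heat-kernel normalization $\int_{-1}^{1}\Phi_n(x_3,t)\,dx_3\le 1$ uniformly in $(t,n)$, produces the stated bound $\frac{C}{R-\rho}\mathcal{E}^{3/2}$.

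The main technical obstacle is the per-annulus estimate $\int_{Q_i(R)}|\pi_0||u|\,dxds \lesssim r_i^{1/2}\mathcal{E}^{1/2}E_i(R)$ with the sharp split between the global energy $\mathcal{E}$ and the local energy $E_i(R)$. Because Calder\'on--Zygmund is inherently nonlocal, $\pi_0$ on $Q_i$ cannot be directly replaced by an $E_i$-controlled object; the correct balance is achieved by combining the H\"older-in-time gain $r_i^{1/2}$ on the slab $(-r_i^2,0)$ with a splitting of the contribution into a globally $\mathcal{E}^{1/2}$-controlled norm of $u$ and a locally $E_i^{1/2}$-controlled norm, as in Lemma \ref{prop:I1I2}. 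A parallel subtlety arises for $\pi_h$: the Bernstein estimate must be applied in a Lebesgue norm compatible with the $\Phi_n$-weighting so as to recover the sharp $(R-\rho)^{-1}$ factor rather than higher negative powers that one would get from using a crude pointwise harmonic estimate.
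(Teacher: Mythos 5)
There are two genuine gaps, one in each half of your argument.

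\emph{First estimate.} Your per-annulus pairing does not produce the stated right-hand side. With $\|\pi_0\|_{L^{4/3}(-r_i^2,0;L^{3/2})}\leq Cr_i^{1/2}\mathcal{E}$ (which is correct, via Calder\'on--Zygmund, $\|u\|_{L^4_tL^3_x}\leq C\mathcal{E}^{1/2}$ and H\"older in time) and $\|u\|_{L^4_tL^3_x(Q_i)}\leq CE_i(R)^{1/2}$, you get
\beno
\frac{C}{R-\rho}\sum_{i=0}^{n}r_i^{-1}\cdot r_i^{1/2}\,\mathcal{E}\,E_i(R)^{1/2},
\eeno
i.e.\ $\mathcal{E}\,E_i^{1/2}$ per scale, whereas the lemma requires $\mathcal{E}^{1/2}E_i$; since $E_i\leq\mathcal{E}$ your bound is strictly weaker. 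The obstruction you correctly identify (nonlocality of $T$) is not resolved by H\"older in time: to place \emph{two} powers of $u$ in the local energy $E_i(R)$ one must decompose $\pi_0=\sum_j\pi_{0,j}=\sum_j T(f\phi_j)$ exactly as in Lemma \ref{prop:pi-01}, so that the near-diagonal piece $\Pi_{0,i}=T(\chi_i f)$ obeys $\|\Pi_{0,i}(\cdot,t)\|_{L^{5/3}}\leq C\|u(\cdot,t)\|^2_{L^{10/3}(U_i(R))}$ (a local quantity), which is then paired with a single globally controlled norm of $u$ (the paper uses $L^{20/3}_tL^{5/2}_x\lesssim\mathcal{E}^{1/2}$); the off-diagonal piece is handled by harmonicity of $\pi_{0,j}$ away from its support. (Your weaker bound would in fact still close the iteration of Section 4 after a Young inequality, but it does not prove the inequality as stated.)

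\emph{Second estimate.} The step ``absorbing the $\Phi_n$-weight by the normalization $\int_{-1}^{1}\Phi_n\,dx_3\leq 1$'' after pairing $\nabla\pi_h\in L^{3/2}_{t,x}$ with $u\in L^{3}_{t,x}$ does not work: the H\"older exponents $\tfrac23+\tfrac13=1$ are already saturated, so the weight must be taken in $L^\infty$, and $\|\Phi_n\|_{L^\infty(Q_n(R))}\sim r_n^{-1}\to\infty$. This is precisely the difficulty the whole section is built around. The paper's treatment splits off the region away from $\{x_3=0,\,t=0\}$ with a cut-off $\zeta$ (there an integration by parts and $\|\pi_h\|_{L^{3/2}}\|u\|_{L^3}$ suffice), and on the inner region uses the mean value property to get a pointwise bound $|\nabla\pi_h|\leq C(R-\rho)^{-3}\|\pi_h\|_{L^{3/2}}$ on balls of radius $(R-\rho)/4$ covering the annulus, pairs $\nabla\pi_h\in L^{3/2}_tL^\infty_x$ with $u\in L^3_tL^1_x$ on the thin slabs $U_k(R)$ (gaining $|U_k\cap B|^{2/3}\sim(r_k(R-\rho)^2)^{2/3}$), and closes with Young's inequality and the summable factor $\sum_k r_k^{1/6}$. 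Without some version of this localization-in-$x_3$ argument the factor $\Phi_n$ cannot be controlled, so your proof of the second inequality is incomplete at its central point.
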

The proof of this lemma is similar as in \cite{CW}, and the only difference is that we need to deal with a large value of $R$.
\begin{proof}
We first notice that
\begin{align*}
&\left|\int_{-1}^t\int_{U_0(R)}\pi_0  \Phi_n u\cdot \nabla (\eta\psi) dxds\right|\\
&=\left|\int_{-1}^t\int_{U_0(R)}\pi_0  \Phi_n u\cdot \nabla \eta(\psi) dxds+ \int_{-1}^t\int_{U_0(R)}\pi_0  \Phi_n u\cdot \nabla (\psi)\eta dxds\right|.
\end{align*}
Here we give the estimates about the second term on the right side of the above inequality. We find that
\begin{align*}
\int_{-1}^t&\int_{U_0(R)}\pi_0 \Phi_n\eta  u\cdot \nabla \psi dxds\\
= &\sum_{k=0}^{n}\int_{-1}^t\int_{U_0(R)}\pi_0 \Phi_n\eta  u\cdot \nabla \psi \phi_k dxds
= \sum_{j=0}^{n}\sum_{k=0}^{n}\int_{-1}^t\int_{U_0(R)}\pi_{0,j} \Phi_n\eta  u\cdot \nabla \psi \phi_k dxds\\
=&\sum_{k=0}^{n}\sum_{j=k}^{n}\int_{-1}^t\int_{U_0(R)}\pi_{0,j} \Phi_n\eta  u\cdot \nabla \psi \phi_k dxds+\sum_{j=0}^{n}\sum_{k=j+1}^{n}\int_{-1}^t\int_{U_0(R)}\pi_{0,j} \Phi_n\eta  u\cdot \nabla \psi \phi_kdxds\\
=&I'+II'.
\end{align*}
As in the proof of Lemma \ref{prop:pi-01}, we know that $I'$ can be written as
\beno
I'=\sum_{k=0}^{n}\int_{-1}^t\int_{U_0(R)}\Pi_{0,k} \Phi_n\eta  u\cdot \nabla \psi \phi_k dxds
\eeno
with
\beno
\Pi_{0,k}=\left\{\begin{array}{lll}
\pi_0,\quad {\rm if }\quad k=0;\\
T(\chi_{k} f),\quad {\rm if }\quad k=1,\cdots,n.
\end{array}\right.
\eeno
Then we get
\begin{align*}
I'\leq &\frac{C}{R-\rho}
\sum_{i=0}^{n}\int_{A_i(R)}|\Pi_{0,i}||u|\frac{1}{\sqrt{(-s+r_n^2)}}e^{-\frac{x_3^2}{4(-s+r_n^2)}}\eta dxds\\
\leq &\frac{C}{R-\rho}\sum_{i=0}^{n}r_i^{-1}r_i^{\frac12}\|u\|^2_{L^\frac{10}{3}(-r_{i}^2,0; L^{\frac{10}{3}}(U_i(R)) )}\|u\|_{L^\frac{20}{3}(-r_{i}^2,0; L^{\frac{5}{2}}(U_i(R)) )}\\
\leq&C(R-\rho)^{-1}\mathcal{E}^{\frac12} \sum_{i=0}^{n}r_i^{\frac12}(r_i^{-1}E_i(R)).
\end{align*}
By a similar argument as $II_2$ in the proof of  Lemma \ref{prop:pi-01}, we can obtain the estimate of $II'$ by using the norm of $\|u\|_{L_t^{\frac{4p}{3p-6}}L_x^p}$ instead of the norm of $u_3$. Then
\beno
\Big|\int_{-1}^t\int_{U_0(R)}\pi_0  \Phi_n u\cdot \nabla (\eta\psi) dxds\Big|\leq C\mathcal{E}^{\frac32}+C(R-\rho)^{-1}\mathcal{E}^{\frac12} \sum_{i=0}^{n}r_i^{\frac12}(r_i^{-1}E_i(R)).
\eeno

Now we turn to show the proof of the second result of the lemma. We first choose a cut-off function $\zeta(x_3,t)\in C_c^\infty\big((-\frac12,\frac12)\times(-\frac14,0]\big)$ satisfying $\zeta(x_3,t)=1$ in $(-\frac14,\frac14)\times(-\frac{1}{16},0]$  and
\beno
 |\partial_3\zeta|\leq C.
\eeno
Then
\beno
&&\left|\int_{Q_0(R)}\nabla \pi_h \cdot u(\Phi_n\eta\psi)(1-\zeta) dxds\right|\\
&&\leq \left|\int_{Q_0(R)}\pi_h  u\cdot\nabla (\Phi_n\eta\psi(1-\zeta)) dxds\right|\\
&&\leq \frac{C}{R-\rho}\|\pi_h\|_{L^{\frac32}(Q_0(R))}\|u\|_{L^{3}(Q_0(R))}\leq \frac{C}{R-\rho}\mathcal{E}^{\frac32}.
\eeno
Moreover, for fixed $R$ and $\rho$, there exist finite balls centered at $x'_j\in B'(\frac{R+\rho}{2})$ with $j=1,\cdots,J$, whose radius is $\frac{R-\rho}{4}$, and there hold
\beno
&&\bigcup_{j=1}^J\Big\{x', |x'-x'_j|<\frac{R-\rho}{4}\Big\}\supset B'\Big(\frac{R+\rho}{2}\Big),\\
&&\sum_{j=1}^J\Big|\Big\{x', |x'-x'_j|<\frac{R-\rho}{2}\Big\}\Big|\leq C |B'(R)|.
\eeno
Then
\begin{align*}
&\left|\int_{-1}^t\int_{U_0(R)}\nabla \pi_h \cdot u(\Phi_n\eta\psi)\zeta dxds\right|\\
&\leq C \sum_{k=1}^n\sum_{j=1}^Jr_k^{-1}\int_{Q_k(R)\cap\{|x'-x'_j|<\frac{R-\rho}{4}\}}|\nabla \pi_h||u|dxds\\
&\leq C \sum_{k=1}^n\sum_{j=1}^Jr_k^{-1}\|\nabla \pi_h\|_{L^\frac32(-r_k^2,0; L^\infty(U_k(R)\cap\{|x'-x'_j|<\frac{R-\rho}{4}\}))}\|u\|_{L^3(-r_k^2,0; L^1(U_k(R)\cap\{|x'-x'_j|<\frac{R-\rho}{4}\}))}\\
&\leq C (R-\rho)^{\frac43}\sum_{k=1}^n\sum_{j=1}^Jr_k^{-1/3}\|\nabla \pi_h\|_{L^\frac32(-r_k^2,0; L^\infty(U_k(R)\cap\{|x'-x'_j|<\frac{R-\rho}{4}\}))}\cdot\\
&\quad\cdot\|u\|_{L^3(-r_k^2,0; L^3(U_k(R)\cap\{|x'-x'_j|<\frac{R-\rho}{4}\}))}\\
&\leq C(R-\rho)^2 \sum_{k=1}^n\sum_{j=1}^Jr_k^{-1/3}\|\nabla \pi_h\|^\frac32_{L^\frac32(-r_k^2,0; L^\infty(U_k(R)\cap\{|x'-x'_j|<\frac{R-\rho}{4}\})}\\
&\quad+C \sum_{k=1}^nr_k^{-1/3}\|u\|^3_{L^3(-r_k^2,0; L^3(U_k(R)))}.
\end{align*}
For any 
\beno
x^*\in U_k(R)\cap\Big\{x=(x',x_3);|x'-x'_j|<\frac{R-\rho}{4}\Big\},\eeno
we have  
\beno
d(x^*,\partial U_0(R))>\min\Big\{\frac12,\frac{R-\rho}{4}\Big\}=\frac{R-\rho}{4}
\eeno
due to $k\geq 1$ and $|R-\rho|\leq \frac12.$ Thus, there exists $x_3^*\in (-\frac12,\frac12)$ such that 
\beno
x^*\in B\Big((x_j',x_3^*); \frac{R-\rho}{2}\Big)\subset U_0(R)\cap\Big\{|x'-x'_j|<\frac{R-\rho}{2}\Big\}.
\eeno 
Since $\pi_h$ is harmonic in $U_0(R)$, 
there holds
\begin{align*}
|\nabla \pi_h|(x^*)\leq& \frac{C}{R-\rho}\frac{1}{|R-\rho|^3}\int_{B((x_j',x_3^*); \frac{R-\rho}{2})}|\pi_h|dx\\
\leq& \frac{C}{(R-\rho)^3}\|\pi_h\|_{L^\frac32(U_0(R)\cap\{|x'-x'_j|<\frac{R-\rho}{2}\})},
\end{align*}
which implies 
\beno
&&\|\nabla \pi_h\|^\frac32_{L^\frac32(-r_k^2,0; L^\infty(U_k(R)\cap\{|x'-x'_j|<\frac{R-\rho}{4}\})}\\
&&\leq \int_{-r_k^2}^0\|\nabla \pi_h(\cdot,s)\|_{L^\infty( U_k(R)\cap\{|x'-x'_j|<\frac{R-\rho}{4}\})}^{\frac32}ds\\
&&\leq \frac{C}{(R-\rho)^3}\int_{-r_k^2}^0\int_{(U_0(R)\cap\{|x'-x'_j|<\frac{R-\rho}{2}\})}|\pi_h|^\frac32ds.
\eeno
Hence,
\begin{align*}
I_{33}\leq&
\frac{C}{R-\rho}\sum_{k=1}^nr_k^{-1/3}\left(\| \pi_h\|^\frac32_{L^\frac32(-r_k^2,0; L^\frac32(U_0(R))}+\|u\|^3_{L^3(-r_k^2,0; L^3(U_k(R))}\right)\\
\leq&
\frac{C}{R-\rho}\sum_{k=1}^nr_k^{1/6}\left(\|\pi_h\|^\frac32_{L^2(-r_k^2,0; L^\frac32(U_0(R))}+\|u\|^3_{L^4(-r_k^2,0; L^3(U_k(R))}\right)\\
\leq& \frac{C}{R-\rho}\Big(\|\pi\|^\frac32_{L^2(-1,0;L^\frac32(U_0(R))}+\mathcal{E}^{\frac32}\Big).
\end{align*}
Applying Calder\'on-Zygmund estimates, there holds
\beno
\Big|\int_{-1}^t\int_{U_0(R)}\nabla \pi_h \cdot u(\Phi_n\eta\psi) dxds\Big|\leq \frac{C}{R-\rho}\mathcal{E}^{\frac32}.
\eeno

The proof is completed.
\end{proof}

\section{Proof of Theorem \ref{thm:main2}}

This section is devoted to the proof of Theorem \ref{thm:main2} by using Lemma \ref{lem:iter} and the interpolation inequality.

\begin{proof}
Gathering the estimates in Lemma  \ref{prop:I1I2}, \ref{prop:pi-01} and \ref{prop:pi-02}, we have
\begin{align*}
r_n^{-1}E_n(\rho)
\leq & C\frac{\mathcal{E}}{(R-\rho)^2}+\frac{C}{R-\rho}\mathcal{E}^{\frac32}+C\sum_{i=0}^{n}(r_i^{-1}E_i(R))(B_i+C_i)\\
&+ C(R-\rho)^{-1}\mathcal{E}^{\frac12} \sum_{i=0}^{n}r_i^{\frac12}(r_i^{-1}E_i(R)).
\end{align*}
Note that  $B_i\leq C_i$  and $r_n^{-1}E_n(R)\leq 2 r_{n-1}^{-1}E_{n-1}(R) $. Then we have
\begin{align*}
r_n^{-1}E_n(\rho)
\leq & C\frac{\mathcal{E}}{(R-\rho)^2}+\frac{C}{R-\rho}\mathcal{E}^{\frac32}+C\sum_{i=0}^{n-1}(r_i^{-1}E_i(R))C_i\\
&+ C(R-\rho)^{-1}\mathcal{E}^{\frac12} \sum_{i=0}^{n-1}r_i^{\frac12}(r_i^{-1}E_i(R)).
\end{align*}
Choosing $\rho=R-\frac12$, we have
\begin{align*}
r_n^{-1}E_n(\rho)
\leq & C\mathcal{E}+C\mathcal{E}^{\frac32}+C\sum_{i=0}^{n-1}(r_i^{-1}E_i(R))C_i\\
&+ C\mathcal{E}^{\frac12} \sum_{i=0}^{n-1}r_i^{\frac12}(r_i^{-1}E_i(R)).
\end{align*}
Then let $R,\rho\rightarrow\infty$, which deduces that
\beno
r_n^{-1}E_n(\infty)
\leq C\mathcal{E}+C\mathcal{E}^{\frac32}+C\sum_{i=0}^{n-1}(r_i^{-1}E_i(\infty))C_i+C\mathcal{E}^{\frac12} \sum_{i=0}^{n-1}r_i^{\frac12}(r_i^{-1}E_i(\infty))
\eeno
At last, due to $\sum_{i\geq 0}C_i\leq C\|u_3\|_{L^{q,1}_tL^p_x}$ and Lemma \ref{lem:iter}, we have for any $n\in\mathbb{N}$,
\beno
r_n^{-1}E_n(\infty)\leq C(\mathcal{E}+\mathcal{E}^{\frac{3}{2}})e^{\sum_{i=0}^\infty (C_i+\mathcal{E}^{\frac12} r_i^{\frac12}) }\leq C(\mathcal{E}+\mathcal{E}^{\frac{3}{2}})e^{C\mathcal{E}^{\frac12}},
\eeno
which yields that
\begin{align*}
r^{-2}\|u\|_{L^3(Q_r)}^3\leq Cr^{-\frac{3}{2}}\|u\|^3_{L^4(-r^2,0;L^3(B(r)))}\leq C (r^{-1}E(r))^{\frac{3}{2}}\leq C.
\end{align*}
The proof is completed.
\end{proof}

\appendix
\section{}
\begin{lemma}\label{lem:iter}
Let $\{b_j\}_{j\in\mathbb{N}}$, $\{y_j\}_{j\in\mathbb{N}}$ be non-negative series and satisfy the following inequality
\beno
y_n\leq C_0+\sum_{j=0}^{n-1}b_jy_j,  n\geq 1~~\mathrm{and}~~y_0\leq C_0.
\eeno
Then we have for any $n\in\mathbb{N}$,
\beno
y_n\leq C_0e^{\sum_{j=0}^{n-1}b_j}.
\eeno
\end{lemma}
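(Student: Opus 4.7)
The plan is to prove this discrete Gronwall-type inequality by induction on $n$, using the elementary inequality $1+x\leq e^x$ for $x\geq 0$ to carry out a telescoping sum.

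First I would verify the base cases. For $n=0$, the hypothesis $y_0\leq C_0$ gives $y_0\leq C_0 \leq C_0 e^0$, which matches the empty-sum convention $\sum_{j=0}^{-1}b_j = 0$. For $n=1$, the hypothesis gives $y_1\leq C_0+b_0 y_0\leq C_0(1+b_0)\leq C_0 e^{b_0}$, as desired.

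For the inductive step, I assume $y_j\leq C_0 e^{\sum_{k=0}^{j-1}b_k}$ for all $j\leq n-1$. Substituting into the hypothesis yields
\beno
y_n\leq C_0+\sum_{j=0}^{n-1}b_j y_j\leq C_0\Bigl(1+\sum_{j=0}^{n-1}b_j e^{\sum_{k=0}^{j-1}b_k}\Bigr).
\eeno
The key step is to bound the sum on the right by $e^{\sum_{k=0}^{n-1}b_k}-1$. Using $b_j\leq e^{b_j}-1$ (which follows from $1+b_j\leq e^{b_j}$ since $b_j\geq 0$), one gets
\beno
b_j e^{\sum_{k=0}^{j-1}b_k}\leq (e^{b_j}-1)e^{\sum_{k=0}^{j-1}b_k}=e^{\sum_{k=0}^{j}b_k}-e^{\sum_{k=0}^{j-1}b_k},
\eeno
which telescopes upon summing $j=0,\dots,n-1$ to give
\beno
\sum_{j=0}^{n-1}b_j e^{\sum_{k=0}^{j-1}b_k}\leq e^{\sum_{k=0}^{n-1}b_k}-1.
\eeno
Plugging this back yields $y_n\leq C_0 e^{\sum_{k=0}^{n-1}b_k}$, completing the induction.

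Since the argument is a textbook discrete Gronwall inequality, there is no real obstacle; the only thing to watch is the bookkeeping of the index in the exponent (in particular the empty-sum convention when $j=0$) so that the telescoping identity collapses cleanly.
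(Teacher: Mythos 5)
Your proof is correct: the induction is properly grounded (including the empty-sum convention at $n=0$), the inductive substitution is valid, and the telescoping bound via $b_j\leq e^{b_j}-1$ collapses exactly as you claim, giving $y_n\leq C_0 e^{\sum_{k=0}^{n-1}b_k}$. The paper reaches the same conclusion by a slightly different route: it introduces the majorizing sequence $x_0=C_0$, $x_n=C_0+\sum_{j=0}^{n-1}b_jx_j$, observes that $y_n\leq x_n$ and that the recursion satisfies $x_{n}=x_{n-1}(1+b_{n-1})$, hence $x_n=C_0\prod_{i=0}^{n-1}(1+b_i)$ exactly, and only then applies $1+b_i\leq e^{b_i}$ once at the end. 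The paper's comparison-sequence argument has the small advantage of producing the sharper intermediate bound $y_n\leq C_0\prod_{i=0}^{n-1}(1+b_i)$ before any exponential estimate is made; your direct induction avoids introducing an auxiliary sequence but pays for it with the telescoping bookkeeping. Both are complete proofs of the same discrete Gronwall inequality, and either suffices for the application in the paper.
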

\begin{proof}
We first define the following non-negative series $\{x_j\}$
\begin{align*}
x_0=C_0,\quad  x_n=C_0+\sum_{j=0}^{n-1}b_jx_j,\quad n\geq 1.
\end{align*}
It is easy to check that for any $j\in\mathbb{N}$, $x_j\geq y_j$. On the other hand, by the definition of $\{x_j\}$,  it can be represented as for any $n\geq 1$
\begin{align*}
x_n=C_0\prod_{i=0}^{n-1}(1+b_i)\leq C_0 e^{\sum_{i=0}^{n-1}b_i}.
\end{align*}
Hence, we obtain that for any $n\geq 1$
\begin{align*}
y_n\leq x_n\leq C_0 e^{\sum_{i=0}^{n-1}b_i},
\end{align*}
which along with the condition that $y_0\leq C_0$ completes the proof of this lemma.
\end{proof}

\begin{lemma}\label{lem:at-decom}
Let $0<p,q<\infty$. Then for any $f\in L^{p,q}(\mathbb{R})$, there exists a sequence $\{c_n\}_{n\in\mathbb{Z}}\in \ell^q$ and sequence of functions $\{f_n\}_{n\in\mathbb{Z}}$ with each $f_n$ bounded by $2^{-n/p}$ and supported on a set of measure $2^n$ such that
\begin{align*}
f=\sum_{n\in\mathbb{Z}}c_nf_n,
\end{align*}
and
\begin{align*}
c(p,q)\|\{c_n\}\|_{\ell^q}\leq \|f\|_{L^{p,q}}\leq C(p,q)\|\{c_n\}\|_{\ell^q},
\end{align*}
where the constant $c(p,q)$ and $C(p,q)$ only depend on $p,q$.
\end{lemma}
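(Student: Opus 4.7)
The plan is to obtain the atomic decomposition by partitioning $\mathbb{R}$ into dyadic level sets of $|f|$ calibrated by the decreasing rearrangement $f^*$, with each piece normalized to satisfy the required $L^\infty$ and support bounds.

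First I would recall the classical equivalence between the Lorentz quasi-norm and the rearrangement integral,
$$\|f\|_{L^{p,q}}^q \approx \int_0^\infty \bigl(t^{1/p} f^*(t)\bigr)^q \frac{dt}{t},$$
together with its discrete version $\|f\|_{L^{p,q}}^q \approx \sum_{n\in\mathbb{Z}} \bigl(2^{n/p} f^*(2^n)\bigr)^q$, which follows because $f^*$ is monotone decreasing and $t \mapsto t^{q/p}$ is comparable to $2^{nq/p}$ on the dyadic interval $(2^{n-1}, 2^n]$. Both equivalences hold with constants depending only on $p,q$.

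Next, for each $n\in\mathbb{Z}$ set $a_n = f^*(2^n)$, which is a non-increasing sequence in $n$ that is finite since $f \in L^{p,q}$ forces $f^*(t)<\infty$ for $t>0$. Define the disjoint level sets
$$E_n = \{x \in \mathbb{R} : a_n < |f(x)| \le a_{n-1}\}.$$
By definition of the distribution function, $|E_n| \le |\{|f|>a_n\}| \le 2^n$, and the $E_n$'s cover $\{|f|>0\}$ up to a null set. Now set
$$c_n = 2^{n/p} a_{n-1}, \qquad f_n = c_n^{-1} f\,\chi_{E_n}.$$
Then $|f_n| \le c_n^{-1} a_{n-1} = 2^{-n/p}$, and $f_n$ is supported on $E_n$, whose measure is at most $2^n$ (which one may inflate to exactly $2^n$ by adding a set on which $f_n = 0$). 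Moreover $f = \sum_{n\in\mathbb{Z}} c_n f_n$ a.e.

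The norm equivalence follows by comparing $\{c_n\}$ to the rearrangement sums. For the lower inequality,
$$\sum_n |c_n|^q = \sum_n 2^{nq/p} f^*(2^{n-1})^q \approx \sum_n 2^{nq/p} f^*(2^n)^q \approx \|f\|_{L^{p,q}}^q,$$
the shift by one index costing only a factor $2^{q/p}$. For the upper inequality, one first checks that any admissible atom satisfies $\|f_n\|_{L^{p,q}} \le C(p,q)$: since $f_n^*$ is bounded by $2^{-n/p}$ and supported in $[0,2^n]$,
$$\|f_n\|_{L^{p,q}}^q \le 2^{-nq/p}\int_0^{2^n} t^{q/p}\,\frac{dt}{t} = \frac{p}{q},$$
and then applies the quasi-triangle inequality in $L^{p,q}$ (or, in the concrete construction above, uses the disjoint supports of the $f_n$'s to rearrange $(\sum_n c_n f_n)^*$ directly) to conclude $\|f\|_{L^{p,q}} \le C(p,q)\|\{c_n\}\|_{\ell^q}$.

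The main technical obstacle is that $f^*$ may have plateaus or jump discontinuities, so the inequality $|\{|f|>a_n\}|\le 2^n$ can be strict and the sets $E_n$ need not have measure exactly $2^n$; this is handled cleanly by working throughout with the upper bound $|E_n|\le 2^n$ and padding with null pieces of $f$. The only other delicate point is the upper-bound direction of the norm equivalence, where the quasi-Banach nature of $L^{p,q}$ (when $q<1$, or $p<1$) means the triangle inequality carries a constant; however, since the atoms in the constructed decomposition have disjoint supports, this constant is harmless and the equivalence $c(p,q)\|\{c_n\}\|_{\ell^q}\le \|f\|_{L^{p,q}}\le C(p,q)\|\{c_n\}\|_{\ell^q}$ follows with constants depending only on $p$ and $q$.
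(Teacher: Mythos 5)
Your proof is correct and follows essentially the same route as the paper's: both decompose $f$ over the dyadic level sets of $|f|$ calibrated by the decreasing rearrangement $f^*(2^n)$, normalize each piece by $c_n\approx 2^{n/p}f^*(2^n)$, and obtain the norm equivalence from the discrete characterization $\|f\|_{L^{p,q}}^q\approx\sum_n\bigl(2^{n/p}f^*(2^n)\bigr)^q$. The only differences are cosmetic (an index shift in the definition of $c_n$ and of the level sets, and your more explicit handling of plateaus and of padding the supports), so no further comment is needed.
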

\begin{proof}
Let $f\in L^{p,q}(\mathbb{R})$. We denote $f^*$ as the corresponding decreasing rearrangement of $f$. We let
\begin{align}
c_n:=2^{n/p}f^*(2^n),\quad A_n:=\{x: f^*(2^{n+1})<|f(x)|\leq f^*(2^n)\}\quad\mathrm{and}\quad f_n:=c_n^{-1}f\mathbf{1}_{A_n}.
\end{align}
By direct calculation, it is easy to check that
\begin{align*}
f=\sum_{n\in\mathbb{Z}}c_nf_n.
\end{align*}
Now we start to prove the second statement. We notice that by the definition of Lorentz space, we have
\begin{align*}
\|f\|_{L^{p,q}}^q&= \sum_{n\in\mathbb{Z}}\int_{2^n}^{2^{n+1}}(s^{1/p}f^*(s))^qs^{-1}ds\\
&\leq\sum_{n\in\mathbb{Z}}(f^{*}(2^n))^q2^{nq/p}2^{-nq/p}\int_{2^n}^{2^{n+1}}s^{\frac{q}{p}-1}ds\\
&\leq \frac{p}{q}(2^{q/p}-1) \sum_{n\in\mathbb{Z}}(f^{*}(2^n))^q2^{nq/p}=\frac{p}{q}(2^{q/p}-1) \|(c_n)\|^q_{\ell^q}
\end{align*}
and
\begin{align*}
\|f\|_{L^{p,q}}^q&= \sum_{n\in\mathbb{Z}}\int_{2^n}^{2^{n+1}}(s^{1/p}f^*(s))^qs^{-1}ds\\
&\geq \sum_{n\in\mathbb{Z}}2^{q(n+1)/p}(f^*(2^{n+1}))^q2^{-q(n+1)/p}\int_{2^n}^{2^{n+1}}s^{q/p-1}ds\\
&=\frac{p}{q}(1-2^{-q/p})\|(c_n)\|_{\ell^q}^q.
\end{align*}
\end{proof}

\begin{lemma}\label{lem:lorentz}
 For any \beno
\frac{2p}{2p-3}\leq \tilde{q}<q=\frac{2p}{p-3}, \quad 3<p<\infty,
\eeno
we have
\beno
\sum_{k=0}^{\infty}{r_k}^{1-\frac3p-\frac{2}{\tilde{q}}}\left(\int_{-r_{k}^2}^{0}\|u_3(\cdot,s)\|_{L^p}^{\tilde{q}}ds\right)^{\frac{1}{\tilde{q}}}\leq C\|u_3\|_{L^{q,1}_tL^p_x(-r_0,0;\mathbb{R}^3)}
\eeno
\end{lemma}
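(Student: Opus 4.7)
\smallskip
\noindent\textbf{Proof plan for Lemma \ref{lem:lorentz}.} Write $f(s)=\|u_3(\cdot,s)\|_{L^p(\mathbb{R}^3)}$, set $T_k=r_k^2=4^{-k}$, and observe the identity
$$
1-\tfrac{3}{p}=\tfrac{2}{q},\qquad\text{so}\qquad r_k^{\,1-3/p-2/\tilde q}=T_k^{1/q-1/\tilde q}.
$$
Hence the target inequality becomes
$$
\sum_{k\ge 0} T_k^{1/q-1/\tilde q}\,\|f\|_{L^{\tilde q}(-T_k,0)}\le C\|f\|_{L^{q,1}(-1,0)}.
$$
Note that $1/\tilde q-1/q>0$ exactly because $\tilde q<q$, so the weight $T_k^{1/q-1/\tilde q}$ grows as $k\to\infty$. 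Since a direct application of H\"older in Lorentz spaces on each fixed interval $(-T_k,0)$ only gives $\|f\|_{L^{\tilde q}(-T_k,0)}\lesssim T_k^{1/\tilde q-1/q}\|f\|_{L^{q,1}(-1,0)}$, which makes the $k$-sum diverge, one must exploit the localization $L^{q,1}(-T_k,0)$ via a decomposition that is adapted to the scales $2^n$.

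The plan is to apply the atomic decomposition of Lemma \ref{lem:at-decom} to $f$ in $L^{q,1}$: write $f=\sum_{n\in\mathbb Z}c_n f_n$ with $|f_n|\le 2^{-n/q}$, $|\mathrm{supp}\,f_n|\le 2^{n+1}$, and $\sum_n |c_n|\le C\|f\|_{L^{q,1}}$. For a single atom,
$$
\|f_n\|_{L^{\tilde q}(-T_k,0)}\le 2^{-n/q}\,\min(T_k,\,2^{n+1})^{1/\tilde q},
$$
so after swapping the order of summation it suffices to show that
$$
S_n:=\sum_{k\ge 0} T_k^{1/q-1/\tilde q}\,2^{-n/q}\min(T_k,2^{n+1})^{1/\tilde q}\le C
$$
uniformly in $n\in\mathbb Z$; combined with $\sum_n|c_n|\le C\|f\|_{L^{q,1}}$ this yields the claim.

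The estimate of $S_n$ is obtained by splitting the sum according to whether $T_k\le 2^n$ or $T_k>2^n$. In the first regime the summand equals $2^{-(2k+n)/q}$, which gives a geometric series of ratio $2^{-2/q}<1$. In the second regime (only nonempty for $n<0$) the summand equals $2^{(1/\tilde q-1/q)(2k+n)}$ with $2k+n<0$; since $1/\tilde q-1/q>0$ this is an increasing geometric sequence with ratio $4^{1/\tilde q-1/q}>1$, so the sum is controlled by its last term, which is of size $O(1)$. In both cases one obtains $S_n\le C\min(1,2^{-n/q})$, uniform in $n$, which is all that is needed.

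The main (minor) obstacle is the bookkeeping of the two geometric series together with the fact that the useful decay in $k$ requires $\tilde q<q$ strictly (this is exactly the strict inequality in the hypothesis); the hypothesis $\tilde q\ge 2p/(2p-3)>1$ is used only to ensure $L^{\tilde q}$ satisfies the triangle inequality so that $\|f\|_{L^{\tilde q}(-T_k,0)}\le\sum_n|c_n|\|f_n\|_{L^{\tilde q}(-T_k,0)}$.
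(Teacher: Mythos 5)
Your proposal is correct and follows essentially the same route as the paper: both apply the atomic decomposition of Lemma \ref{lem:at-decom} to $f(s)=\|u_3(\cdot,s)\|_{L^p}$, bound the contribution of each atom on $(-r_k^2,0)$ by $2^{-n/q}\min(r_k^2,2^{n})^{1/\tilde q}$ (the paper writes this as the case split $k\le \ell/2$ versus $k\ge\ell/2$ with $\ell=-n$, using $|D_\ell\cap I_k|\le\min(|D_\ell|,|I_k|)$), and sum the two resulting geometric series uniformly in the atom index. Your observations that the strict inequality $\tilde q<q$ is what makes the first series converge, and that $\tilde q\ge 1$ is only needed for the triangle inequality in $L^{\tilde q}$, are both accurate.
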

\begin{proof}
Let $f(s)=\|u_3(\cdot,s)\|_{L^p}$. By Lemma \ref{lem:at-decom}, we know that \beno
f=\sum_{\ell=0}^{+\infty}c_\ell f_\ell,\quad \|f\|_{L^{q,1}}\approx \sum_{\ell=0}^\infty |c_\ell|,
\eeno
where
\beno
|f_\ell|\leq 2^{\frac{\ell}{q}},\quad |D_\ell={\rm supp}~ f_\ell|\approx2^{-\ell}.
\eeno
Then we have
\beno
&&\sum_{k=0}^{\infty}{r_k}^{1-\frac3p-\frac{2}{\tilde{q}}}\left(\int_{-r_{k}^2}^{0}\|u_3(\cdot,s)\|_{L^p}^{\tilde{q}}ds\right)^{\frac{1}{\tilde{q}}}
=\sum_{k=0}^{\infty}{r_k}^{1-\frac3p-\frac{2}{\tilde{q}}}\left(\int_{I_k}|f|^{\tilde{q}}ds\right)^{\frac{1}{\tilde{q}}}\\
&&\leq \sum_{k=0}^{\infty}{r_k}^{1-\frac3p-\frac{2}{\tilde{q}}}  \sum_\ell |c_\ell|  2^{\frac{\ell}{q}}|D_\ell\cap I_k|^{\frac{1}{\tilde{q}}}
\leq   \sum_\ell |c_\ell|  2^{\frac{\ell}{q}}\sum_{k=0}^{\infty}{r_k}^{1-\frac3p-\frac{2}{\tilde{q}}}|D_\ell\cap I_k|^{\frac{1}{\tilde{q}}}\\
&&\leq   \sum_\ell |c_\ell|  2^{\frac{\ell}{q}}\sum_{k=0}^{\ell/2}{r_k}^{1-\frac3p-\frac{2}{\tilde{q}}}|D_\ell\cap I_k|^{\frac{1}{\tilde{q}}}
+\sum_\ell |c_\ell|  2^{\frac{\ell}{q}}\sum_{k=\ell/2}^{\infty}{r_k}^{1-\frac3p-\frac{2}{\tilde{q}}}|D_\ell\cap I_k|^{\frac{1}{\tilde{q}}},
\eeno
where $I_k=(-r_{k}^2,0)$.
On the other hand, we notice that for any $k\leq\ell/2$,
\beno
{r_k}^{1-\frac3p-\frac{2}{\tilde{q}}}|D_\ell\cap I_k|^{\frac{1}{\tilde{q}}}\leq C2^{-\frac{\ell}{\tilde{q}}}2^{-k(1-\frac{3}{p}-\frac{2}{\tilde{q}})},
\eeno
and for any $\ell/2\leq k<\infty$,
\beno
{r_k}^{1-\frac3p-\frac{2}{\tilde{q}}}|D_\ell\cap I_k|^{\frac{1}{\tilde{q}}}\leq C2^{-\frac{2k}{\tilde{q}}}2^{-k(1-\frac{3}{p}-\frac{2}{\tilde{q}})}=C2^{-k(1-\frac{3}{p})}.
\eeno
Then we have
\beno
&&\sum_{k=0}^{\infty}{r_k}^{1-\frac3p-\frac{2}{\tilde{q}}}\left(\int_{-r_{k}^2}^{0}\|u_3(\cdot,s)\|_{L^p}^{\tilde{q}}ds\right)^{\frac{1}{\tilde{q}}}\\
&&\leq  \sum_{\ell=0}^\infty|c_\ell|2^{\frac{\ell}{q}-\frac{\ell}{\tilde{q}}}\sum_{k=0}^{\ell/2}2^{-k(1-\frac{3}{p}-\frac{2}{\tilde{q}})}+\sum_{\ell=0}^\infty|c_\ell|2^{\frac{\ell}{q}}\sum_{k=\ell/2}^{\infty}2^{-k(1-\frac{3}{p})},
\eeno
which along with the restriction on $p,q$ implies that
\begin{align*}
\sum_{k=0}^{\infty}{r_k}^{1-\frac3p-\frac{2}{\tilde{q}}}\left(\int_{-r_{k}^2}^{0}\|u_3(\cdot,s)\|_{L^p}^{\tilde{q}}ds\right)^{\frac{1}{\tilde{q}}}\leq C\sum_{l=0}^{\infty}|c_l|\leq C\|u_3\|_{L^{q,1}_tL^p_x(-r_0,0;\mathbb{R}^3)}.
\end{align*}

The proof is completed.
\end{proof}

\smallskip

Finally let us recall the lemma about the harmonic function in \cite{CW}.

\begin{lemma}\label{lem:harmonic estimate}
Let $0<r\leq R<\infty$ and $h:B'(2R)\times (-r,r)\rightarrow \mathbb{R}$ be harmonic. Then for all
$0<\rho\leq \frac{r}{4}$ and $1\leq \ell\leq p<\infty$,
\beno
\|h\|_{L^p(B'(R)\times (-\rho,\rho))}^p\leq c \rho r^{2-3\frac{p}{\ell}}\|h\|_{L^\ell(B'(2R)\times (-r,r))}^p.
\eeno
\end{lemma}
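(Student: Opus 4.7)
The strategy is to decouple the factor of $\rho$ (reflecting the thinness of the slab $(-\rho,\rho)$ in the $x_3$ direction) from the factor $r^{2-3p/\ell}$ (reflecting mean value bounds on balls of radius comparable to $r$). A naive pointwise sup bound followed by integration over $B'(R)\times(-\rho,\rho)$ would cost an unwanted factor of $R^2$ from the horizontal area; the remedy is to apply the sub-mean value property on three-dimensional balls of radius $3r/4$, swap the order of integration by Fubini, and exploit that the slab is much thinner than these balls in order to extract the sharp volume $\rho r^2$.

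I would first handle the endpoint exponent $p=\ell$. For any $x=(x',x_3)\in B'(R)\times(-\rho,\rho)$, the ball $B(x,3r/4)$ is contained in $B'(2R)\times(-r,r)$ (this uses $R\ge r$ in the horizontal direction and $\rho\le r/4$ in the vertical), so by subharmonicity of $|h|^\ell$
\beno
|h(x)|^\ell\le Cr^{-3}\int_{B(x,3r/4)}|h(y)|^\ell dy.
\eeno
Integrating in $x$ and applying Fubini,
\beno
\int_{B'(R)\times(-\rho,\rho)}|h|^\ell dx\le Cr^{-3}\int_{B'(2R)\times(-r,r)}|h(y)|^\ell|S(y)|dy,
\eeno
where $S(y)=\{x\in B'(R)\times(-\rho,\rho):|x-y|<3r/4\}$. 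The key geometric input is $|S(y)|\le C\rho r^2$: the admissible $x_3$ lies in $(-\rho,\rho)\cap(y_3-3r/4,y_3+3r/4)$, which has length at most $\min(2\rho,3r/2)=2\rho$ since $2\rho\le r/2$; for each such $x_3$ the admissible $x'$ lies in a planar disk of radius at most $3r/4$ and thus area at most $Cr^2$. This yields $\|h\|_{L^\ell(B'(R)\times(-\rho,\rho))}^\ell\le C\rho r^{-1}\|h\|_{L^\ell(B'(2R)\times(-r,r))}^\ell$, which is exactly the stated bound for $p=\ell$.

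To pass from $p=\ell$ to general $\ell\le p<\infty$, I would use subharmonicity of $|h|^\ell$ combined with H\"older to obtain the pointwise estimate
\beno
|h(x)|\le Cr^{-3/\ell}\|h\|_{L^\ell(B(x,3r/4))}\le Cr^{-3/\ell}\|h\|_{L^\ell(B'(2R)\times(-r,r))},
\eeno
and then split $|h|^p=|h|^{p-\ell}\cdot|h|^\ell$, bounding the first factor by the pointwise estimate (so that it can be pulled outside the integral) and applying the $p=\ell$ case to the remaining $L^\ell$ integral. The total power of $r$ adds up to $-3(p-\ell)/\ell-1=(2\ell-3p)/\ell=2-3p/\ell$, as claimed.

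The only non-routine ingredient is the geometric volume estimate $|S(y)|\le C\rho r^2$: bounding $S(y)$ by $|B(y,3r/4)|\sim r^3$ loses a factor of $r/\rho$, while bounding it by $|B'(R)\times(-\rho,\rho)|\sim R^2\rho$ loses a factor of $R^2/r^2$, and neither is good enough. Using the thinness of the slab in $x_3$ and the two-dimensionality of the constraint on $x'$ simultaneously is precisely what produces the sharp $\rho r^2$ scaling and hence the correct power $r^{2-3p/\ell}$ in the final bound.
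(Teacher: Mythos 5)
Your proof is correct. The paper gives no proof of this lemma at all --- it is simply quoted from Chae--Wolf \cite{CW} --- so there is no in-paper argument to compare against; your mean-value-plus-Fubini proof is the standard one and checks out in every detail: $|h|^\ell$ is subharmonic for $\ell\geq 1$, the inclusion $B(x,3r/4)\subset B'(2R)\times(-r,r)$ follows from $3r/4<R$ and $\rho\leq r/4$, the slice count $|S(y)|\leq 2\rho\cdot\pi(3r/4)^2$ gives the sharp $\rho r^{2}$, and the exponent bookkeeping $-3(p-\ell)/\ell-1=2-3p/\ell$ in the $p>\ell$ step is right (with the constant allowed to depend on $p,\ell$, as it must).
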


\section*{Acknowledgments}

W. Wang was supported by NSFC under grant 11671067 and the Fundamental Research Funds for the Central Universities. Z. Zhang is partially supported by NSF of China under Grant 11425103.

\end{document}